\documentclass[12pt]{amsart}
\usepackage[normalem]{ulem}
\usepackage{graphicx}
\usepackage{latexsym}
\usepackage{amsfonts}
\usepackage{amscd}
\usepackage{amssymb}
\usepackage{amsmath}
\usepackage{amsmath}
\usepackage{hyperref}
\usepackage{amssymb}
\usepackage{mathrsfs}
\usepackage{tikz}
\tikzstyle{vertex}=[circle]
\tikzstyle{goto}=[->,shorten >=1pt,>=stealth,semithick]
\usetikzlibrary{graphs}

\newtheorem{thm}{Theorem}[section]

\newtheorem{lemma}[thm]{Lemma}
\newtheorem{prop}[thm]{Proposition}
\newtheorem{cor}[thm]{Corollary}

\theoremstyle{definition}
\newtheorem{definition}[thm]{Definition}

\theoremstyle{remark}
\newtheorem{remark}[thm]{Remark}

\numberwithin{equation}{section}

\newcommand{\mc}{\mathcal}

\newcommand{\NN}{\mathbb{N}}
\newcommand{\CC}{\mathbb{C}}
\newcommand{\ZZ}{\mathbb{Z}}
\newcommand{\Q}{\mathcal{Q}}
\newcommand{\cO}{\mathcal{O}}

\newcommand{\Id}{\text{Id}}

\renewcommand{\L}{\Lambda}
\renewcommand{\a}{\alpha}
\renewcommand{\b}{\beta}
\newcommand{\dom}{\text{dom}}
\newcommand{\ran}{\text{ran}}

\newcommand{\iso}{\text{Iso}}
\newcommand{\M}{\mathcal{M}}
\newcommand{\I}{\mathcal{I}}

\newcommand{\J}{\mathcal{J}}

\newcommand{\B}{\mathcal{B}}
\renewcommand{\H}{\mathcal{H}}

\newcommand{\Ef}{\widehat E_0}

\newcommand{\Eu}{\widehat E_\infty}
\newcommand{\Et}{\widehat E_{\text{tight}}}
\newcommand{\Ct}{C^*_{\text{tight}}}
\newcommand{\Cr}{C^*_{\text{r}}}
\newcommand{\gt}{\mathcal{G}_{\text{t}}}

\newcommand{\vp}{\varphi}

\newcommand{\gtwo}{\g^{(2)}}
\newcommand{\go}{\g^{(0)}}
\newcommand{\cF}{\mathcal{F}}

\newcommand{\Iso}{\textup{Iso}}
\newcommand{\Ered}{E_{\text{red}}}
\newcommand{\g}{\mathcal{G}}
\newcommand{\C}{\mathcal{C}}
\newcommand{\spn}{\text{span}}
\newcommand{\Js}{J_{\text{sing}}}
\newcommand{\Cess}{C^*_{\text{ess}}}

\newcommand{\ga}{\gamma}
\newcommand{\de}{\delta}

\begin{document}
	
	\title{Uniqueness theorems for combinatorial C*-algebras}

	\author{Charles Starling}
	\address[C. Starling]{Carleton University, School of Mathematics and Statistics. 4302 Herzberg Laboratories}
	\email{cstar@math.carleton.ca}
	
	\thanks{C. Starling is funded by the NSERC discovery grant RGPIN-2021-03834.}

	\begin{abstract}
		Spielberg's construction of C*-algebras from left cancellative small categories is a common generalization for most C*-algebras one would consider to come from ``combinatorial data,'' including graph and $k$-graph C*-algebras, Li's semigroup C*-algebras, Nekrashevych's self-similar action algebras, and more. We use known groupoid models of these algebras and Exel's theory of tight representations of inverse semigroups to prove uniqueness theorems for these C*-algebras.
		
		 As applications, we improve on our previous uniqueness theorem for the boundary quotient C*-algebras of right LCM monoids, and we also generalize the uniqueness theorem of Brown, Nagy, and Reznikoff for row-finite higher-rank graphs to the finitely aligned case. 
	\end{abstract}
	
	\maketitle
	\section{Introduction}
 Broadly speaking, this paper concerns C*-algebra constructed from combinatorial data. Such constructions appear throughout the history of the subject, starting perhaps from the seminal work of Cuntz \cite{Cu77} who gave the first examples (the Cuntz algebras $\cO_n$) of simple C*-algebras generated by isometries. Discovery of these C*-algebras led to that of Cuntz-Krieger algebras associated to $\{0,1\}$-matrices \cite{CK80}, graph C*-algebras \cite{KPR98}, Exel-Laca algebras \cite{EL99}, and $k$-graph C*-algebras \cite{KuPa00}.
 
 Another generalization of the Cuntz algebras comes from semigroup theory. In his study of Wiener-Hopf operators, Nica \cite{Ni92} defined a C*-algebra from certain group-embeddable monoids and showed that when he applies his construction to the free semigroup on two elements, he recovers $\cO_2$. Li generalized this definition \cite{Li12} to general left-cancellative monoids.
 
 A common generalization to all these constructions was given by Spielberg \cite{Sp20} who defined a C*-algebra from a general {\em left-cancellative small category} (LCSC). Each of the above combinatorial objects are special cases of such: for example, a monoid is a small category with one object, while the set of finite paths through a graph or a $k$-graph is a small category without inverses. More examples of LCSC C*-algebras appear in the literature: Nekrashevych's self-similar group C*-algebras \cite{Nek09}, Exel and Pardo's generalization to self-similar actions on graphs \cite{EP17} and further generalizations to self-similar actions on $k$-graphs \cite{LY19,  LY21}.
 
 Before Spielberg's work, Exel \cite{Ex08} had already proposed inverse semigroups as the appropriate framework for combinatorial C*-algebras. These perspectives are of course closely related, as \cite{OP20} \cite{LiGarside1} show how to obtain an inverse semigroup from an LCSC and use its {\em tight representations} to characterize Speilberg's algebra. On the other hand, \cite{DGKNW20} shows that any inverse semigroup (up to Morita equivalence) arises from an LCSC. 
 
This paper is about uniqueness theorems for LCSC C*-algebras.  We first recall the original Cuntz-Krieger uniqueness theorem for graph C*-algebras to motivate our results. If $E$ is a row-finite directed graph, then its graph C*-algebra $C^*(E)$ is the universal C*-algebra generated by a set of partial isometries $\{s_e\}_{e\in E^1}$ indexed by the edges of $E$ and a set of mutually orthogonal projections $\{p_e\}_{e\in E^0}$ indexed by the vertices of $E$ subject to the relations
\begin{enumerate}
	\item[(CK1)] $s_e^*s_e = p_{d(e)}$, and
	\item[(CK2)] $\sum_{e\in r^{-1}(v)} s_es_e^* = p_v$ for all vertices $v\in E^0$
\end{enumerate} 

The original Cuntz-Krieger uniqueness theorem \cite{KPR98} states that as long as every cycle in $E$ has an entry, then any $*$-homomorphism $\vp:C^*(E)\to A$ into a C*-algebra $A$ is injective if and only if $\vp(p_v)\neq 0$ for all vertices $v$. Letting $D$ be the C*-algebra generated by the set $\{p_v\}_{v\in E^0}$, we could equivalently say
\begin{enumerate}
	\item a $*$-homomorphism $\vp:C^*(E)\to A$ is injective if and only if $\vp|_{D}$ is injective, or
	\item every nonzero ideal of $C^*(E)$ has nonzero intersection with $D$. 
\end{enumerate}
Given the second item, we say that $D\subseteq C^*(E)$ is an {\em ideal-detecting subalgebra} of $C^*(E)$, and any theorem that identifies an ideal-detecting subalgebra will be called a {\em uniqueness theorem}. The importance of such theorems for C*-algebras coming from generators and relations cannot be overstated, as injectivity of $*$-homomorphisms from such can be difficult to determine: we could not write a better explanation of this problem than that in the introduction to \cite{LS22}, so we direct the reader there.

The original Cuntz-Krieger uniqueness theorem was generalized by Reznikoff and Nagy \cite{NR12} who defined a subalgebra $\mc{M}(E)$ that they called the {\em abelian core} of $E$ that detects ideals in $C^*(E)$ without the assumption that every cycle in $E$ has an entry. These ideas were further developed in \cite{NR14} (defining abelian cores in other contexts), in \cite{BNR14} (generalized to row-finite $k$-graphs), and finally in \cite{BNRSW16}. 

The paper \cite{BNRSW16} is a crucial point in the story, as their uniqueness theorem is about subalgebras arising from the underlying \'etale groupoid. In a way that will be explained below, to an \'etale groupoid $\g$ one can construct a reduced C*-algebra $C^*_r(\g)$. Then \cite[Theorem~3.1]{BNRSW16} says that if $\g$ is a locally compact Hausdorff \'etale groupoid, then $C^*_r(\Iso(\g)^\circ)$ is an ideal-detecting subalgebra of $C^*_r(\g)$ ($\Iso(\g)^\circ$ is the interior of the isotropy subgroupoid of $\g$). This result was generalized to twisted groupoids by Armstrong in \cite{Arm22} and to more general families of subgroupoids by Goerke, Eagle, and Laca in \cite{GEL24}.

If a C*-algebra $A$ is isomorphic to $C^*_r(\g)$ for some locally compact \'etale groupoid $\g$, we say $\g$ is a {\em groupoid model} for $A$. As many of the combinatorial C*-algebras mentioned above are known to have groupoid models, the results of \cite{BNRSW16} can be applied to them. Following Lalonde and Milan \cite{LM17}, this is what we more-or-less did in \cite{St22}. Specifically we:
\begin{enumerate}
	\item Generalized \cite[Theorem~3.1]{BNRSW16} to certain open subgroupoids $\mc{F}\subseteq \Iso(\g)$ when $\g$ is Hausdorff \cite[Theorem~2.1]{St22},
	\item applied this result to inverse semigroups $S$ whose C*-algebras have Hausdorff groupoid models \cite[Theorem~3.3]{St22},
	\item then applied the inverse semigroup result to C*-algebras of right LCM monoids that have Hausdorff groupoid models \cite[Theorem~4.1]{St22} \cite[Theorem~2.1]{BS24}.
\end{enumerate}

In the above list, the Hausdorff assumption is conspicuous. One gets the nicest results in the Hausdorff case, but there are many interesting examples of combinatorial C*-algebras whose underlying groupoids are non-Hausdorff. A major complication in proving uniqueness theorems in the non-Hausdorff case is the {\em singular ideal}. This ideal was defined in \cite{CEPSS} (see also \cite{KM21}, \cite{EPBook22}) and can be explained as follows: $C^*_r(\g)$ is the completion of a certain $*$-algebra of functions $\mc{C}(\g)$, and each function in $\mc{C}(\g)$ is continuous if and only if $\g$ is Hausdorff. Roughly speaking, if elements of $C^*_r(\g)$ can be viewed as functions, the singular ideal $\Js(\g)$ contains the functions that only take nonzero values on meagre sets (see below for the proper definition). 

We do not address the problem of determining when the singular ideal vanishes here (though recent work of Hume \cite{H25} provides a potential path for future work on this). Instead, we sidestep this issue entirely and say a subalgebra $B\subseteq C^*_r(\g)$ is {\em essentially ideal-detecting} if any ideal $I$ of $C^*_r(\g)$ that has zero intersection with $B$ satisfies $I\subseteq \Js(\g)$.

We now provide an overview of what follows. The present paper can be seen as a continuation of \cite{St22}, or perhaps more accurately as an expanded version of it. We follow the same pattern here:
\begin{enumerate}
	\item {\bf Groupoid result:} Recent advances in \cite{KKLRU21}  \cite{CN24} allow us to improve \cite[Theorem~2.1]{St22} by removing the Hausdorff condition, see Proposition~\ref{prop:CN47generalization} below.
	\item {\bf Inverse semigroup result:} This allows us to remove the Hausdorff assumption in \cite[Theorem~3.3]{St22} to obtain an improvement: if $S$ is an inverse semigroup, there is a subsemigroup $S^\Iso\subseteq S$ such that in the reduced tight C*-algebra $C^*_r(\gt(S))$ the subalgebra generated by $S^\Iso$ is essentially ideal-detecting; see Theorem~\ref{thm:MainThm1} for the precise statement.
	\item {\bf LCSC result:} By using an inverse semigroup picture for the C*-algebra of an LCSC given by Ortega and Pardo \cite{OP20}, we identify two essentially ideal-detecting subalgebras of LCSC algebras. See Theorem~\ref{thm:main_theorem} for the precise statement.
\end{enumerate}

We have two main applications of our results. 

\begin{enumerate}
	\item {\bf Right LCM monoids:} A monoid $P$ is called {\em right LCM} if it is left cancellative and, for any $p,q\in P$, $pP\cap qP$ is either empty or equal to $rP$ for some $r\in P$. The {\em core} of $P$ is the submonoid $P_0 = \{p\in P: pP\cap qP\neq \emptyset \text{ for all }q\in P\}$. Its reduced boundary quotient $\mc{Q}_r(P)$ is a certain C*-algebra generated by a copy of $P$, and we show below (Corollary~\ref{cor:LCM}) that the subalgebra generated by $P_0$ is essentially ideal-detecting in $\mc{Q}_r(P)$.
	\item {\bf $k$-graphs: } A $k$-graph is a LCSC $\L$ with a degree map $\deg:\L\to \NN^k$ such that $\deg(\a\b) = \deg(\a) + \deg(\b)$ and has unique factorization in the sense that if $\deg(\a) = m+n$ for some $m,n\in\NN^k$, then there exist unique $\b, \ga\in\L$ with $\a = \b\ga$, $\deg(\b) = m$ and $\deg(\ga) = n$. These properties imply the underlying groupoid is Hausdorff and amenable, so $\cO(\L)$ is the universal C*-algebra for partial isometries $\{W_\a\}_{\a\in\L}$ satisfying certain conditions, see Remark~\ref{rmk:CK_LCSC} below. In \cite{BNR14} they study the case where $\L$ is row-finite, and they define what it means for $(\a,\b)\in\L\times\L$ to be a {\em cycline pair}. In their main theorem, show that the subalgebra of $\cO(\L)$ generated by $\{W_\a W_\b^*: (\a,\b)\text{ is cycline}\}$ is ideal-detecting. In Theorem~\ref{thm:k_graph} we show that this subalgebra is ideal-detecting in the finitely-aligned case as well.
\end{enumerate} 

	{\bf Acknowledgement:} I thank Chris Bruce and Kevin Aguyar Brix for comments on an earlier draft of this work.
	
	\section{Groupoids and their C*-algebras}
	We recall the definitions around \'etale groupoids and the construction of their C*-algebras. For solid references on this, the reader is directed to \cite{R80} and \cite{Sim20}.
	
	A {\em groupoid} is a set $\g$ with a partially-defined multiplication which is associative where defined and for which each element has an inverse. Specifically, there is an inverse map $\ga\mapsto\ga^{-1}$ from $\g$ to $\g$ and a set $\gtwo\subseteq \g\times \g$ called the set of {\em composable pairs} and a product map $(\a,\b)\mapsto \a\b$ from $\gtwo$ to $\g$ such that
	\begin{enumerate}
		\item $(\a,\b),(\b,\ga)\in \gtwo$ implies $(\a\b)\ga = \a(\b\ga)$, 
		\item  $(\ga,\ga^{-1}), (\ga^{-1},\ga)\in\gtwo$ for all $\ga$ and $\ga^{-1}\ga\a = \a$, $\b\ga\ga^{-1} = \b$ whenever $(\ga, \a), (\b,\ga)\in\gtwo$, and
		\item $(\ga^{-1})^{-1} = \ga$ for all $\ga\in\g$. 
	\end{enumerate} The set $\go := \{\gamma\gamma^{-1}: \ga\in \g\}$ is called the {\em unit space} of $\g$. The maps $r, d:\g\to \go$ given by $r(\ga) = \ga\ga^{-1}$ and $d(\ga) = \ga^{-1}\ga$ are called the {\em range} and {\em source}\footnote{Usually the source map is denoted $s$. We use the variable $s$ so often for a general element of an inverse semigroup, that it is best for us to use $d$ for the source map.} maps respectively.  For any subset $\mc{L}\subseteq\g$ and $x\in \go$, we write
	\[
	\mc{L}^x = r^{-1}(x)\cap \mc{L}, \hspace{1cm} \mc{L}_x = d^{-1}(x)\cap \mc{L}, \hspace{1cm}\mc{L}^x_x = \mc{L}_x\cap \mc{L}^x.
	\]
	
	A pair $(\a,\b)$ is in $\gtwo$ if and only if $r(\b) = d(\a)$. A subset $U\subseteq \g$ is called a {\em bisection} if $r|_U$ and $d|_U$ are injective. 
	
	A groupoid is called {\em\'etale} if it is a topological groupoid (meaning it has a topology in which the inverse and product maps are continuous) such that $\go$ is Hausdorff in the relative topology and the range map is a local homeomorphism. An \'etale groupoid admits a basis of open bisections. If $U, V$ are (open) bisections, then so are $UV = \{\a\b:\a\in U, \b\in V, (\a,\b)\in \g^{(2)}\}$ and $U^{-1} : = \{\g^{-1}: \g\in U\}$. If $\g$ is \'etale, then $\g_x$ and $\g^x$ are discrete subspaces of $\g$. If $\g$ is \'etale and admits a basis of {\em compact} open bisections, it is called {\em ample}. 
	
	Let $\g$ be a locally compact \'etale groupoid. Any bisection $U\subseteq \g$ is homeomorphic to an open subset of $\go$, and hence is locally compact Hausdorff. For $f\in C_c(U)$, $f$ is viewed as a function on $\g$ by extending it to be 0 outside of $U$; note that when $\g$ is non-Hausdorff this function might not be continuous. Let
	\[
	\C(\g) = \spn \{f\in C_c(U) : U\subseteq \g\text{ is an open bisection}\}.
	\]
	
	The set of functions $\C(\g)$ was defined by Connes \cite{Co82} \cite{CoNCG} and is a $*$-algebra in a natural way: addition and scalar multiplication are defined pointwise, while if $U$ and $V$ are open bisections and $f\in C_c(U)$, $g\in C_c(V)$, then we let $fg$ be zero outside of the open bisection $UV$, and for $\ga\in UV$ we let
	\[
	fg(\ga) = f(\a)g(\b)
	\]
	where $(\a,\b)$ is the unique element of $U\times V$ such that $\a\b = \ga$ (uniqueness follows from the fact that $U$ and $V$ are bisections). This product is extended to $\C(\g)$ linearly. For $f\in C_c(U)$ define $f^*\in  C_c(U^{-1})$ by
	\[
	f^*(\ga) = \overline{f(\ga^{-1})}.
	\]
	We then extend this to $\C(\g)$ conjugate linearly. These operations make $\C(\g)$ a $*$-algebra. 
	
	For each $x\in \go$, let $\rho_x: \C(\g)\to \mc{B}(\ell^2(\g_x))$ denote the $*$-representation of $\C(\g)$ given by
	\[
	\rho_x(f)\delta_\ga = \sum_{\a\in \g_{r(\ga)}}f(\a)\delta_{\a\ga}.
	\]
	Then the {\em reduced C*-algebra of} $\g$, denoted $C_r^*(\g)$, is the completion of the image of $\C(\g)$ under the representation $\bigoplus_{x\in \go}\rho_x$. There is another C*-algebra associated to $\g$, called the {\em full C*-algebra of $\g$}, denoted $C^*(\g)$, which is universal for representations of $\C(\g)$. One defines a norm on $\C(\g)$ by $\|f\|: = \sup\{\|\pi(f)\|:\pi\text{ is a representation of }\C(\g)\}$, then $C^*(\g)$ is the completion of $\C(\g)$ in this norm. It is universal in the sense that if $\pi:\C(\g)\to A$ is a $*$-homomorphism of $\C(\g)$ into a C*-algebra $A$, there is a $*$-homomorphism $\vp:C^*(\g)\to A$ that agrees with $\pi$ on $\C(\g)$. 
	 
	 {\em Renault's $j$ map} is the injective linear contraction $j: C^*_r(\g)\to \ell^\infty(\g)$ given by
	 \[
	 j(a)(\ga) = \langle \rho_{d(\ga)}(a)\delta_{d(\ga)}, \delta_{\ga} \rangle \hspace{1cm}\ga\in \g, a\in C^*_r(\g)
	 \]
	 
	 The following description of the singular ideal is given in \cite{KKLRU21}. For a locally compact Hausdorff space $X$, recall the following notation:
	 \begin{itemize}
	 	\item $\B^\infty(X)$ = the C*-algebra of bounded Borel functions (sup norm),
	 	\item $\M^\infty(X)$ = the ideal of $\B^\infty(X)$ of functions with meagre support,
	 	\item Dix$(X) = \B^\infty(X)/\M^\infty(X)$, the Dixmier algebra of $X$.
	 \end{itemize}
	
	 The linear map $\Ered: \C(\g)\to \text{Dix}(\go)$ determined by $\Ered(f) = \left. f\right|_U$, where $U$ is any open dense subset of $\go$ on which $f$ is continuous, defines a local conditional expectation on $C^*_r(\g)$. The ideal of singular elements can then be defined by 
	\[
	\Js(\g): = \{a\in C^*_r(\g): \Ered(a^*a) = 0\}
	\]
	and the {\em essential C*-algebra of $\g$ is}
	\[
	\Cess(\g) : = C^*_r(\g)/\Js(\g).
	\]
	By \cite[Proposition~7.18]{KM21}, when $\g$ is covered by countably many bisections (as is the case for all the groupoids in this paper) $a\in \Js(\g)$ if and only if $d(\text{supp}(j(a)))$ is a meagre subset of $\go$. 
	
	We now define the concept we will be investigating in this paper. 
	\begin{definition}\label{def:ess_ideal_detecting}
		Let $\g$ be a locally compact \'etale groupoid. We say that a subalgebra $B\subseteq C^*_r(\g)$ is {\em essentially ideal-detecting} if any $*$-homomorphism $\pi:C^*_r(\g)\to A$ into a C*-algebra $A$ that is injective on $B$ satisfies $\ker\pi\subseteq \Js(\g)$. Equivalently, $B$ is essentially ideal-detecting if whenever an ideal $I$ of $C^*_r(\g)$ has zero intersection with $B$, then $I\subseteq\Js(\g)$. 
	\end{definition}
	
		In what follows we will be applying the following result (which is a generalization of \cite[Theorem~4.7]{CN24}) to specific classes of groupoids.
	
	\begin{prop}\label{prop:CN47generalization}
		Let $\g$ be a locally compact \'etale groupoid and suppose that $\g$ is either Hausdorff or can be covered by countably many open bisections. Suppose $\H\subseteq \g$ is an open subgroupoid of $\g$ with the property that the set $Y = \{x\in \go: \g^x_x = \H_x^x\}$ is dense. Then for any ideal $I$ of $C^*_r(\g)$ we have that $I\cap C^*_r(\H)\subseteq \Js(\g)$ implies $I\subseteq \Js(\g)$. 
	\end{prop}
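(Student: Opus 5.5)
We argue by contraposition. Suppose $I$ is an ideal of $C^*_r(\g)$ with $I\not\subseteq \Js(\g)$; we will produce a nonzero element of $I\cap C^*_r(\H)$ that is not in $\Js(\g)$. First we record that, since $\H$ is open in $\g$, the inclusion $\C(\H)\subseteq\C(\g)$ extends to an isometric embedding $C^*_r(\H)\hookrightarrow C^*_r(\g)$. The reason is that for $x\in\go$ the space $\ell^2(\g_x)$ decomposes as a direct sum of copies of $\ell^2(\H_y)$ over the cosets $\H\ga$, and each such copy is invariant under $\rho_x(\C(\H))$. In particular Renault's $j$ map on $C^*_r(\H)$ is the restriction of $j$ on $C^*_r(\g)$, and it vanishes off $\H$.

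\textbf{Step 1 (a nonsingular positive element).} Pick $a\in I$ with $a\notin\Js(\g)$. Then $b:=a^*a\in I$ is positive and $\Ered(b)\neq 0$. Since $j(b)|_{\go}$ agrees with $\Ered(b)$ off a meagre set, there is $\epsilon>0$ and a nonmeagre (Baire-measurable) set $W\subseteq\go$ on which $j(b)>\epsilon$. By the Baire property, $W$ is comeagre in some nonempty open set $V\subseteq\go$. In the countably-covered case, \cite[Proposition~7.18]{KM21} lets us work with the description of $\Js$ via supports of $j$. In the Hausdorff case $\Js(\g)=0$ and $j(b)$ is continuous on $\go$, so this step is immediate.

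\textbf{Step 2 (localization, the main obstacle).} We want $c\in I\cap C^*_r(\H)$ with $\Ered(c)\neq0$. The plan follows \cite[Theorem~4.7]{CN24} and \cite{KKLRU21}. Choose a point $x\in V\cap Y$ at which $x$ is a ``continuity point'' for the countably many bisections involved; such points are comeagre, and $Y$ is dense, so we need $Y\cap V$ to meet this comeagre set. We then approximate $b$ by some $f\in\C(\g)$, $f=\sum_i f_i$ with $f_i\in C_c(U_i)$. Each $U_i$ either avoids $\g^x_x\setminus\H$ near $x$ or meets the isotropy at $x$. Because $\g^x_x=\H^x_x$, every isotropy arrow at $x$ lies in the open set $\H$. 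Bisections through non-isotropy arrows at $x$ can be cut off by compressing with $h\in C_c(N)$, where $N$ is a small neighbourhood of $x$, since $r(\ga)\neq d(\ga)$ lets us separate them. Bisections through isotropy arrows can be shrunk into $\H$. This gives $hfh\in\C(\H)$ approximately, and hence $hbh\approx$ an element of $C^*_r(\H)$ with $j(hbh)(x)\geq\epsilon|h(x)|^2$ up to error.

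The difficulty is that $Y$ is only dense, not comeagre, so $x$ might not be a generic point. We handle this as \cite{CN24} does: using the conditional-expectation-like map onto $C^*_r(\H)$ at $x$ and the positivity of $b$, we show that the compression $hbh$ lies within small distance of $I\cap C^*_r(\H)$. Then a functional-calculus trick, $(hbh-\delta)_+$ type, produces an honest element $c\in I$ whose $j$-support is contained in $\H$, so that $c\in C^*_r(\H)$ by injectivity of $j$ together with the restriction of $j$ recorded above.

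\textbf{Step 3 (conclusion).} The element $c$ satisfies $j(c)>0$ on a nonempty open subset of $\go$, so it is nonsingular, contradicting $I\cap C^*_r(\H)\subseteq\Js(\g)$.

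The crux is Step 2: producing an element of $I$ lying exactly in $C^*_r(\H)$, using only density of $Y$ in the non-Hausdorff setting.
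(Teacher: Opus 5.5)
\textbf{There is a genuine gap in Step~2, at the point you yourself flag as the crux.} The step that is supposed to manufacture a nonsingular element of $I\cap C^*_r(\H)$ does not work.

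\begin{itemize}
\item The element $(hbh-\delta)_+$ does lie in $I$, but nothing puts it in $C^*_r(\H)$. The element $hbh$ is only \emph{close} to $C^*_r(\H)$, and functional calculus does not push an element into a subalgebra it is merely near.
\item The fallback ``$j$-support contained in $\H$, hence in $C^*_r(\H)$ by injectivity of $j$'' is not an argument. Injectivity of $j$ only identifies two elements you already have, and you have no candidate preimage in $C^*_r(\H)$.
\item For a merely open $\H$ there is no conditional expectation onto $C^*_r(\H)$ to appeal to.
\end{itemize}

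The missing idea is to route the hypothesis through the quotient map $q\colon C^*_r(\g)\to C^*_r(\g)/I$. Three facts combine:
\begin{itemize}
\item the induced map $C^*_r(\H)/(I\cap C^*_r(\H))\to C^*_r(\g)/I$ is isometric;
\item $I\cap C^*_r(\H)\subseteq\Js(\g)$;
\item $\Ered$ is contractive and vanishes on $\Js(\g)$, since $\Ered(a)^*\Ered(a)\le\Ered(a^*a)$ by Cauchy--Schwarz.
\end{itemize}
Together they give $\|\Ered(d)\|\le\|q(d)\|$ for every $d\in C^*_r(\H)$. This is the same kind of estimate the paper uses in the proof of Theorem~\ref{thm:main_theorem}. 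If you really want an honest element instead: for $d\ge 0$ in $C^*_r(\H)$ with $\|q(d)\|<\eta$, one has $(d-\eta)_+\in I\cap C^*_r(\H)$. The functional calculus must be applied to $d$, not to $hbh$.

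\textbf{Completing your outline.} With this inequality, your argument closes without any genericity. Let $\epsilon$ be your Step~1 threshold, and choose $f\in\C(\g)$ with $\|b-f\|<\eta$ where $4\eta<\epsilon$.
\begin{itemize}
\item For \emph{any} $x\in Y$, your case analysis produces $h\in C_c(\go)$ with $0\le h\le 1$, $h(x)=1$, and $hfh\in\C(\H)$ \emph{exactly}. This uses that each $f_i$ has compact support inside its bisection $U_i$.
\item Then $\|q(hfh)\|\le\|hfh-hbh\|<\eta$, so $\|\Ered(hfh)\|<\eta$.
\item Since $\Ered(hbh)=h^2\Ered(b)$, this gives $\|h^2\Ered(b)\|<2\eta$.
\item Hence $|\Ered(b)|<4\eta<\epsilon$ off a meagre set on the nonempty open set $\{h^2>1/2\}\cap V$, provided $x\in V\cap Y$.
\item This contradicts Step~1 by the Baire category theorem.
\end{itemize}
So you should measure $\Ered(b)$ on a neighbourhood rather than evaluate $j(hbh)(x)$ at a single point.

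\textbf{Your worry about $Y$ is also misplaced.} The set $\Iso(\g)\setminus\H$ is closed. So in the countably covered case, $\go\setminus Y=d(\Iso(\g)\setminus\H)$ is a countable union of locally closed sets. Each has empty interior because $Y$ is dense, and a locally closed set with empty interior is nowhere dense. Hence $Y$ is comeagre.

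\textbf{Comparison with the paper.} The paper does no compression at all. It works fibrewise through \cite{CN24}:
\begin{itemize}
\item at a dense set of points $x\in Y$ with $\Js(\g)\subseteq\ker\rho_x$, the hypothesis kills the isotropy fibre of $I\cap C^*_r(\H)$;
\item \cite[Proposition~4.6]{CN24} then gives $I_x\cap C^*_r(\H^x_x)=0$, hence $I_x=0$ because $\H^x_x=\g^x_x$;
\item \cite[Lemma~4.1]{CN24} yields $I\subseteq\Js(\g)$.
\end{itemize}
Once repaired as above, your route is more elementary and self-contained. It uses only contractivity and $C_0(\go)$-bimodularity of $\Ered$, together with its vanishing on $\Js(\g)$.
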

	Note: Proposition~\ref{prop:CN47generalization} has the same assumptions as \cite[Theorem~4.7]{CN24}, and there Christensen and Neshveyev conclude that any ideal with 0 intersection with $C^*_r(\H)$ must be contained in $\Js(\g)$. So this is a slight improvement, as we relax the assumption that $C_r^*(\H)\cap I = \{0\}$ to $C_r^*(\H)\cap I\subseteq\Js(\g)$. This generalization will be needed to prove part of Theorem~\ref{thm:main_theorem}.
	\begin{proof}
		By \cite[Section 4]{CN24} there exists an open dense subset $X\subseteq \go$ such that 
		\begin{equation}\label{JsingDef}
			\Js(\g) = \bigcap_{x\in X}\ker(\rho_x)
		\end{equation}
		where $\rho$ denotes the left regular representation of $C_r^*(\g)$ at the unit $x$. Furthermore, this equality remains true if $X$ is replaced with any subset of $X$ that is also dense in $\go$. Because $Y = \{x\in \go: \g^x_x = \H_x^x\}$ is dense in $\go$ and $X$ is open and dense, we have that $X\cap Y$ is dense in $\go$, and so 
		\begin{equation}\label{JsingXY}
			\Js(\g) = \bigcap_{x\in X\cap Y}\ker(\rho_x).
		\end{equation}
		Suppose $a\in J:= C_r^*(\H)\cap I$ and $x\in X\cap Y$. Then $J\subseteq \Js(\g)$ and \eqref{JsingXY} imply that $j(a)(\gamma) = 0$ for all $\gamma\in \g^x\cup \g_x$; in particular $j(a)(\gamma) = 0$ for all $\gamma\in \g_x^x = \H^x_x$. Hence $\left.j(a)\right|_{\H^x_x} = 0$. Thus (using the notation of \cite[Proposition~4.6]{CN24}) we have that $J_x = 0$. By that same proposition we then have that $I_x \cap C^*_r(\H_x^x) = 0$. But since $C^*_r(\H_x^x) = C^*_r(\g_x^x)$ and $I_x \subseteq C^*_r(\g_x^x)$ we get that $I_x = 0$ for all $x\in X\cap Y$. Since $X\cap Y\subseteq \go$ is dense, \cite[Lemma~4.1]{CN24} implies that $I\subseteq \Js(\g)$.  
	\end{proof}
	
	\section{The tight C*-algebra of an inverse semigroup}
	
	\subsection{Inverse semigroups}
	
	A semigroup $S$ is an {\em inverse semigroup} if for all $s\in S$ there exists unique $s^*\in S$ such that $ss^*s = s$ and $s^*ss^* = s^*$. A {\em zero} in an $S$ is an element $0\in S$ such that $0s = s0 = 0$ for all $s\in S$. If $S$ has a zero it is necessarily unique. An element $e\in S$ is called an {\em idempotent} if $e^2 = e$. It is well-known that idempotents in an inverse semigroup commute. 
	
	Let $S$ be an inverse semigroup with idempotent set $E\subseteq S$ containing a zero element 0. Then $S$ carries a canonical order $s\leqslant t \iff ts^*s = s$. On $E$ this order becomes $e\leqslant f \iff ef = e$. With this order, $E$ is a semilattice. The order is preserved by multiplication, i.e. 
	\[
	s\leqslant t \implies sr\leqslant tr\text{ and }rs\leqslant rt
	\]
	for $r,s,t\in S$. 
	
	Two elements $s,t\in S$ are called {\em compatible} if $s^*t$ and $st^*$ are idempotents---note that idempotents are always compatible. For any subset $A\subseteq S$, we write $\bigvee_{a\in A}a$ for the least upper bound of $A$ in $S$ in the above canonical order, if it exists. By \cite[Lemma~ 1.1.6]{La98}, $\bigvee_{a\in A}a\in S$ implies that elements of $A$ are pairwise compatible. We say $S$ is {\em (resp. finitely) complete} if whenever $A$ is a (resp. finite) subset of $S$ whose elements are pairwise compatible, then $\bigvee_{a\in A}a\in S$. In addition, we say that $S$ is {\em (resp. finitely) distributive} if for any $s\in S$ and and (resp. finite) subset $A\subseteq S$, then 
	\[
	\bigvee_{a\in A}a\in S \implies s\left(\bigvee_{a\in A}a\right) = \bigvee_{a\in A}sa\in S
	\]
	
	We record some facts about the order in an inverse semigroup for use later.
	\begin{lemma}\label{lem:ISG_order_wedge}
		Let $S$ be a (resp. finite) distributive inverse semigroup, let $A\subseteq  S$ be a (resp. finite) subset of $S$ and suppose $s = \bigvee_{a\in A}a\in S$ .
		
		\begin{enumerate}
			\item If $e\leqslant a^*a$ for some $a\in S$, then $ses^* = aea^*$. 
			\item If $t\in S$ and $tt^*\leqslant aa^*$ for some $a\in A$, then $t^*st = t^*at$. 
		\end{enumerate}	
	\end{lemma}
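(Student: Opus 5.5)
Both parts should follow from the single fact that each $a\in A$ lies below the join, $a\leqslant s$, together with two equivalent ways of writing that inequality. Distributivity and the precise form of the join should not be needed. Throughout, the $a$ in both statements is understood to be an element of $A$, as in part (2).

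First I record the inequality. Since $s=\bigvee_{a\in A}a$ is an upper bound for $A$, every $a\in A$ satisfies $a\leqslant s$. By the definition of the order this means $a = s a^*a$. I also want the right-handed form $a = aa^*s$. To get it, I compute $aa^* = s a^*a\, a^* s^* = s(a^*a)s^*$. Then
$$aa^*s = s(a^*a)s^*s = s(s^*s)(a^*a) = s a^*a = a,$$
using that the idempotents $a^*a$ and $s^*s$ commute and that $ss^*s=s$. (This is also the standard fact \cite[Lemma~1.4.6]{La98}.)

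For part (1), suppose $e\leqslant a^*a$, so that $e = ea^*a = a^*a e$. Substituting $a = sa^*a$ and $a^* = a^*a s^*$ gives
$$aea^* = s(a^*a)e(a^*a)s^* = ses^*.$$

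For part (2), suppose $tt^*\leqslant aa^*$, so that $tt^*aa^* = tt^*$. I use $a = aa^*s$ together with $t^* = t^*tt^*$:
$$t^*at = t^*aa^*st = t^*(tt^*)(aa^*)st = t^*(tt^*)st = t^*st.$$

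The only step requiring any care is deriving $a = aa^*s$ from the paper's one-sided definition of the order. After that, both parts are one-line substitutions.
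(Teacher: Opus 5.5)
Your proof is correct, and it takes a more elementary route than the paper's. The paper proves each identity as two inequalities. The inequalities $aea^*\leqslant ses^*$ and $t^*at\leqslant t^*st$ come from $a\leqslant s$. For the reverse, it uses distributivity to expand $ses^*=\bigvee_{b,c\in A}bec^*$ and $t^*st=\bigvee_{b\in A}t^*bt$. It then bounds each term using pairwise compatibility of elements of $A$, e.g.\ $bec^*=(ba^*)(aea^*)(ac^*)\leqslant aea^*$ and $t^*bt=t^*aa^*bt\leqslant t^*at$.

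You instead get equality directly from the two algebraic forms $a=sa^*a=aa^*s$ of $a\leqslant s$. This shows that distributivity, finiteness of $A$, and even the fact that $s$ is a join are superfluous: both identities hold whenever $a\leqslant s$ in an arbitrary inverse semigroup.

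That generality is useful in the paper itself. $S_\L$ need not be distributive, yet the lemma is applied to $s=\bigcup_i\a_i\b_i^*\in S_\L$, both in Lemma~\ref{lem:LCSC_dist_ish}(1) and in the proof of Theorem~\ref{thm:main_theorem}. With your argument, one only needs that each $\a_i\b_i^*$ is a restriction of $s$, hence lies below $s$ in $S_\L$, with no appeal to distributivity over unions. The paper's route fits the section's emphasis on joins, but for this statement it buys nothing extra.

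Your reading of $a\in A$ in part (1) is the right one. For arbitrary $a\in S$ the identity fails, and the paper's proof also implicitly uses $a\in A$.

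One small slip: in deriving $a=aa^*s$, the displayed expression $sa^*a\,a^*s^*$ equals $sa^*s^*$. You mean $(sa^*a)(a^*as^*)=s(a^*a)s^*$; the conclusion is unaffected.
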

		\begin{proof}
		(1) For any $b, c\in A$ we calculate
		\[
		bec^* = ba^*aea^*ac^*\leqslant aea^*
		\]
		as $ba^*, ac^*$ are idempotents. Hence $ses^* = \bigvee_{b,c\in A}bec^* \leqslant aea^*$ (while $\geqslant$ is automatic). 	
		
		(2) For $b\in A$, we have $aa^*b\leqslant a$ as $a^*b$ is an idempotent. Multiplying on the left by $t^*$ and the right by $t$ gives
		\[
		t^*at\geqslant t^*aa^*bt \geqslant t^*tt^*bt = t^*bt.
		\]
		Thus $t^*st = \bigvee_{b\in A}t^*bt\leqslant t^*at$ (while again $\geqslant$ is automatic).
	\end{proof}
	For any set $X$, let $\I(X)$ denote the set of all bijections between subsets of $X$. It is an inverse monoid when given the operation of composition on largest possible domain, and is called {\em symmetric inverse monoid on $X$}. It has a zero element, namely the empty function. For $f\in \I(X)$, we write $f:\dom(f)\to \ran(f)$.  Two functions $f,g\in \I(X)$ are compatible if and only if $f(x) = g(x)$ for all $x\in \dom(f)\cap \dom(g)$. If $F$ is a set of pairwise compatible elements of $\I(X)$, their join $\bigvee_{f\in F}f$ is the union $\bigcup_{f\in F}f$. 
	
	By \cite[Proposition~1.2.1]{La98} $\I(X)$ is complete and distributive, but we caution that these properties do not necessarily pass to subsemigroups. 
	
	{\bf For the rest of this paper, every inverse semigroup is assumed to have a zero element}.

	A {\em filter} in the idempotent semilattice $E$ is a proper subset $\xi\subsetneq E$ that is downwards directed (i.e. closed under products) and upwards closed (i.e. $e\in\xi$ and $e\leqslant f$ implies $f\in \xi$). The {\em spectrum} of $E$ is the set $\Ef$ of all filters equipped with the relative topology when viewed as a subspace of the product space $\{0,1\}^E$. For finite $X,Y\subseteq E$, the sets
	\[
	U(X,Y) : = \{\xi\in\Ef: x\in \xi\text{ for all }x\in X, Y\cap \xi =\emptyset\}
	\]
	form a compact open basis for this topology on $\Ef$.
	
	\begin{definition}
		Let $\xi\in \Ef$. We say that $\xi$ is
		\begin{enumerate}
			\item an {\em ultrafilter} if it is not properly contained in any other filter, and let $\Eu$ denote the set of ultrafilters;
			\item a {\em tight filter} if $\xi\in \overline{\Eu}$, and
			\item a {\em prime filter} if whenever $\bigvee_{e\in F}e\in \xi$ for some finite set $F\subseteq E$, then $F\cap \xi\neq \emptyset$. 
		\end{enumerate}
	\end{definition} The space $\Et :=\overline{\Eu}$ is called the {\em tight spectrum} of $E$. It is possible to give a characterization of tight filters: for finite subsets $X,Y\subseteq E$ define
	\[
	E^{X,Y}: = \{e\in E: e\leqslant x\text{ for all }x\in X\text{ and }ey = 0\text{ for all }y\in Y\}.
	\]
	We say that $C$ is a {\em cover} of $D$, for $C\subseteq D\subseteq E$, if for all $d\in D$ there exists $c\in C$ such that $cd\neq 0$. Then by \cite[Theorem~12.9]{Ex08}, $\xi$ is tight if and only if whenever $\xi\in U(X, Y)$ for finite sets $X,Y\subseteq E$, then every finite cover $Z$ of $E^{X,Y}$ has nonempty intersection with $\xi$. 
	The first statement of the following was proven in the LCSC case in \cite[Lemma~3.10]{OP20}. 
	
	\begin{lemma}\label{lem:distributive_prime_tight} Let $S$ be an inverse semigroup with idempotent semilattice $E$.
		\begin{enumerate}
			\item 	If $S$ is finitely distributive, then every tight filter in $E$ is a prime filter.
			\item If $S$ is finitely complete, every prime filter in $E$ is a tight filter.
		\end{enumerate}
	\end{lemma}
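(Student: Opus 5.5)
For (1) I will use Exel's characterization of tight filters, \cite[Theorem~12.9]{Ex08}. Let $\xi$ be tight, let $F\subseteq E$ be finite, and suppose $e:=\bigvee_{f\in F}f\in\xi$. Apply the characterization with $X=\{e\}$ and $Y=\emptyset$. Then $\xi\in U(\{e\},\emptyset)$ and $E^{X,Y}=\{g\in E: g\leqslant e\}$.

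I will show that $F$ is a cover of $E^{X,Y}$. Each $f\in F$ satisfies $f\leqslant e$, so $F\subseteq E^{X,Y}$. Now take $g\leqslant e$ with $g\neq 0$. Finite distributivity gives
\[
g = ge = \bigvee_{f\in F} gf .
\]
If every $gf$ were $0$, the join would be $0$, forcing $g=0$. So some $gf\neq 0$, and $F$ covers $E^{X,Y}$. The case $g=0$ deserves a remark: $0$ never meets any element nontrivially. However, $0\notin\xi$ because filters are proper, and Exel's covers are conventionally taken over nonzero elements, so I will interpret ``cover'' accordingly. The characterization then gives $F\cap\xi\neq\emptyset$, so $\xi$ is prime.

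For (2), let $\xi$ be prime. I will verify Exel's condition: suppose $\xi\in U(X,Y)$ and $Z$ is a finite cover of $E^{X,Y}$. The idea is to form a join and use primeness. Set
\[
x_0=\prod_{x\in X}x\in\xi ,
\]
and restrict $Z$ to $Z'=\{zx_0 : z\in Z\}$. Idempotents are pairwise compatible, so finite completeness gives $j:=\bigvee_{z\in Z'}z\in E$.

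It would be enough to show $x_0\in\xi$ forces $j\in\xi$, but that is not yet clear. To get there I first account for $Y$. Finite completeness gives $y_0:=\bigvee_{y\in Y}y\in E$. Since $\xi$ is prime and $Y\cap\xi=\emptyset$, we have $y_0\notin\xi$. The target is the inequality
\[
x_0 \leqslant j\vee y_0 ,
\]
where $j\vee y_0$ again exists by finite completeness.

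To prove it, I will argue that $x_0$ is the join in $E$ of elements below $j$ or below $y_0$. The obstacle is that without distributivity there is no relative complement $x_0(1-y_0)$. So I use the order directly. Let $u$ be an upper bound of $j$ and $y_0$, and suppose $x_0\not\leqslant u$. Then I expect to find a nonzero $g\leqslant x_0$ with $gu=0$, using $g=x_0-x_0u$ in an ambient Boolean sense, which needs justification. Such a $g$ is orthogonal to every $y\in Y$, so $g\in E^{X,Y}$. But it is also orthogonal to $Z$, contradicting that $Z$ is a cover.

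Producing this $g$ is the main obstacle. Finite completeness alone may not give it, and it is where I would look for the intended argument. One option is to take $u=j\vee y_0$ and use the join's minimality via compatible elements. Another is to replace ``$\leqslant$'' by the weaker statement that $\xi\in U(\{x_0\},\emptyset)$ and primeness applied to the finite family $Z'\cup Y$. That family has a join $w$ by finite completeness, and it suffices to show $w\in\xi$. Granting $w\in\xi$, primeness gives $(Z'\cup Y)\cap\xi\neq\emptyset$. Since $Y\cap\xi=\emptyset$, we get $zx_0\in\xi$ for some $z\in Z$, hence $z\in\xi$, and $\xi$ is tight.
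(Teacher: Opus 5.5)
Your part (1) is the paper's argument (Exel's criterion \cite[Theorem~12.9]{Ex08} with $X=\{e\}$, $Y=\emptyset$, and $g=ge=\bigvee_{f\in F}gf$), and it is correct. Your checks that $F\subseteq E^{X,Y}$ and that covers only need to meet \emph{nonzero} elements are more careful than the paper's wording.

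Part (2), however, is not proved. The step you isolate, $x_0\leqslant j\vee y_0$ (equivalently $\bigvee(Z'\cup Y)\in\xi$ in your second variant), is never established. It cannot be, because (2) is false under the stated hypothesis. Let $S=E=\{0<w<x\}$ be a three-element chain; as a semilattice it is an inverse semigroup with zero. Every finite subset has a join (its maximum, or $0$ for $\emptyset$), so $S$ is finitely complete, and it is even finitely distributive. The filter $\xi=\{x\}$ is prime, since a finite $F$ with $\bigvee F=x$ must contain $x$.

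Now take $X=\{x\}$ and $Y=\emptyset$. Then $E^{X,Y}=\{0,w,x\}$, and $Z=\{w\}$ is a finite cover of it with $Z\cap\xi=\emptyset$, so $\xi$ is not tight. You can also see this directly: the only ultrafilter is $\{w,x\}$, and $U(\{x\},\{w\})$ contains $\xi$ but not $\{w,x\}$. In your notation $x_0=x$, $j=w$, $y_0=0$, and $x\nleqslant w$. So exactly your inequality fails, and the nonzero $g\leqslant x_0$ orthogonal to $j\vee y_0$ that you hoped to produce does not exist. The paper's own proof has the same hole: it asserts $\bigvee_{z\in Z}z\in\xi$ ``by upwards closure,'' but nothing in $\xi$ is known to lie below that join.

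Your sketch does identify the missing ingredient, namely relative complements. Suppose $E$ has them, for instance if it is a generalized Boolean algebra. Then $g=x_0\setminus(j\vee y_0)$ exists, lies in $E^{X,Y}$, and is orthogonal to every $z\in Z$, so $g=0$. Hence $j\vee y_0\in\xi$, and primeness applied to the finite set $Z\cup Y$, together with $Y\cap\xi=\emptyset$, gives some $z\in Z\cap\xi$. So (2) holds under such a Boolean-type hypothesis, but not under finite completeness alone.

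Only part (1) is used later in the paper (in Lemma~\ref{lem:LCSC_dist_ish} and Lemma~\ref{lem:Iso_to_core}), so the error in (2) does not propagate.
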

	\begin{proof}
		The proof of \cite[Lemma~3.10]{OP20} carries over almost verbatim in this case, but we give a recap. Let $\xi\in \Et$, let $F\subseteq E$ be a finite set, and suppose $c = \bigvee_{e\in F}e \in \xi$. Then $\xi\in U(\{c\},\emptyset)$, and $E^{\{c\},\emptyset} = \{f\in E: f\leqslant c\}$. Then if $0\neq f\leqslant c$,
		\[
		0\neq f = fc = \bigvee_{e\in F}fe
		\]
		implies that $fe\neq 0$ for some $e\in F$, so that $F$ is a finite cover of $E^{\{c\},\emptyset}$. Then $F\cap \xi\neq\emptyset$ by the above discussion, implying $\xi$ is prime.
		
		If $S$ is finitely complete and $\xi$ is prime, suppose that $X,Y\subseteq E$ are finite, $\xi\in U(X,Y)$ and that $Z$ is a finite cover of $E^{X,Y}$. Then $c : = \bigvee_{z\in Z}z$ is in $E$ by finite completeness, and is in $\xi$ by upwards closure. Because $\xi$ is prime, $Z\cap \xi\neq\emptyset$, so \cite[Theorem~12.9]{Ex08} implies $\xi$ is tight.
	\end{proof}
	Any inverse semigroup acts on its spectrum. Let $s\in S$ and suppose $\xi\in\Ef$ contains $s^*s$. Then 
	\[
	\theta_s(\xi) := \{ses^*:e\in\xi\}^\leqslant
	\]
	is a filter that contains $ss^*$. Then $\theta_s$ is a homeomorphism from $U(s^*s, \emptyset)$ to $U(ss^*, \emptyset)$, and so $\{\theta_s\}_{s\in S}$ is an action of $S$ on $\Ef$. This action leaves the tight spectrum invariant. In this paper we only deal with this action, so we will let
	\[
	D_e:= U(\{e\}, \emptyset)\cap\Et,
	\]
	and note that as above these are compact open subsets of $\Et$. 
	
	As in \cite[Lemma~3.1]{St22} and \cite[Definition~5.8]{LiGarside1} we let
	\begin{equation}\label{eq:Siso_def}
	S^\Iso = \{s\in S: e\leqslant s^*s\implies ses^*e\neq 0\}.
	\end{equation}
	This is an inverse subsemigroup of $S$ containing the idempotents. Then \cite[Lemma 4.9]{EP16} implies
	\[
	s\in S^\Iso \iff \theta_s(\xi) = \xi\hspace{1cm} \text{ for all }\xi\in D_{s^*s}.
	\] 
	
	Write $S\times_\theta \Et: = \{(s, \xi)\in S\times \Et: \xi\in D_{s^*s}\}$ and put an equivalence relation on $S\times_\theta \Et$ by saying $(s,\xi)\sim (t, \eta)$ if and only if $\xi = \eta$ and $se = te$ for some $e\leqslant s^*s, t^*t$ with $e\in \xi$. Then the {\em tight groupoid} of $S$, denoted $\gt(S)$, is the set of equivalence classes
	\[
	\gt(S): = \{[s, \xi]: s\in S, \xi\in D_{s^*s}\}
	\]
	with range, source, product, and inverse given by
	\[
	d([s,\xi]) = \xi, \hspace{.8cm}r([s,\xi]) = \theta_s(\xi)\hspace{.8cm}\hspace{.8cm}[t,\theta_s(\xi)][s,\xi] = [ts,\xi], \hspace{1cm}[s,\xi]^{-1} = [s^*, \theta_s(\xi)]
	\]
	where we are identifying the point $\xi\in \Et$ with the unit $[e,\xi]$ where $e$ is any element of $\xi$. 
	It is an ample groupoid when given the topology generated by sets of the form
	\[
	[s,U]:= \{[s,\xi]:\xi\in U\}, \hspace{1cm}U\subseteq D_{s^*s} \text{ compact open}.
	\]
	In particular, $\gt(S)$ is ample. 
	
	In \cite{Ex08} Exel defined the tight C*-algebra of $S$ as the universal C*-algebra for a class of representations of $S$, which he called {\em tight representations}. By \cite[Corollary~2.3]{DM14}, \cite{Ex21} we can define these representations in a simpler way than originally defined. A {\em representation} of $S$ in a C*-algebra $A$ is a multiplicative map $\pi:S\to A$ that sends $0$ to $0$, and a representation is called {\em cover-to-join} if whenever $e\in E$ and $C$ is a {\em finite} cover of $\{e\}$, then $\bigvee_{c\in C} \pi(c)$ exists in $A$ and is equal to $\pi(e)$. The {\em tight C*-algebra of $S$} is then, by \cite[Corollary~2.5]{DM14} the universal C*-algebra for cover-to-join representations of $S$, and is denoted $\Ct(S)$. 
	
	Exel showed that $\Ct(S)$ is isomorphic to $C^*(\gt(S))$, the full C*-algebra of the tight groupoid of $S$.  Write
	\begin{equation}\label{eq:Ts_elt_def}
	T_s:= 1_{[s,D_{s^*s}]} \in \C(\gt(S)).
	\end{equation}
	
	We recall some notation from Spielberg (see the paragraph above \cite[Theorem~6.3]{Sp14}. If $A$ is a C*-algebra and $F\subseteq A$ is a finite set of partial isometries whose initial and final projections form a commuting family. Suppose further that for all $s,t\in F$, we have $st^*t = ts^*s$. Then there is a partial isometry, denoted there by $\bigvee_{s\in F}s$, with initial projection $\bigvee_{s\in F}s^*s$ and final projection $\bigvee_{s\in F}ss^*$. Using the usual formula $p\vee q = p + q - pq$ for the join of two commuting projections, one can show that for partial isometries $s,t$ as above we have
	\begin{equation} \label{eq:operator_s_vee_t}
	s\vee t = s + t - ts^*s = s + t - st^*t.
	\end{equation}

	As pointed out in \cite{St22}, the fact that $S^\Iso$ contains the idempotent semilattice of $S$ implies that $C^*_r(\gt(S^\Iso))$ is isomorphic to the subalgebra of $C^*_r(\gt(S))$ generated by $\{T_s\}_{s\in S^\iso}$. Using Lemma \ref{prop:CN47generalization} we obtain the following strengthening of \cite[Theorem~3.4]{St22}.
	
	\begin{thm}\label{thm:MainThm1}
				Let $S$ be a countable inverse semigroup and let $S^\Iso$ be as in \eqref{eq:Siso_def}. Then if $I$ is an ideal of $C^*_r(\gt(S))$ such that $I\cap C^*_r(\gt(S^\Iso))\subseteq \Js(\gt(S))$, then $I\subseteq \Js(\gt(S))$. In particular, the subalgebra of $C^*_r(\gt(S))$ generated by $\{T_s\}_{s\in S^\iso}$ is essentially ideal-detecting.
	\end{thm}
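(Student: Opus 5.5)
The plan is to apply Proposition~\ref{prop:CN47generalization} with $\g = \gt(S)$ and $\H$ the open subgroupoid corresponding to $S^\Iso$. Concretely, set
\[
\H := \{[s,\xi]\in\gt(S): s\in S^\Iso\} = \bigcup_{s\in S^\Iso}[s,D_{s^*s}].
\]
The hypotheses of the proposition will be checked as follows.

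First, $\gt(S)$ is covered by countably many open bisections. Since $S$ is countable, the sets $[s,D_{s^*s}]$, $s\in S$, are countably many compact open bisections, and together they cover $\gt(S)$.

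Second, $\H$ is an open subgroupoid. It is open because it is a union of basic open sets. It is closed under products and inverses because $S^\Iso$ is an inverse subsemigroup. It contains the unit space because $E\subseteq S^\Iso$.

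Third, I identify $C^*_r(\H)$ with the subalgebra generated by $\{T_s\}_{s\in S^\Iso}$. As remarked before the theorem, that subalgebra is isomorphic to $C^*_r(\gt(S^\Iso))$. More directly, $C^*_r(\H)$ embeds in $C^*_r(\gt(S))$ since $\H$ is an open subgroupoid, and $C^*_r(\H)$ is generated by indicator functions of compact open subsets of the bisections $[s,D_{s^*s}]$ with $s\in S^\Iso$. Each such indicator is a product $T_s T_e$ with $e\in E$. So $C^*_r(\H)$ is exactly the subalgebra of the statement, and the condition $I\cap C^*_r(\gt(S^\Iso))\subseteq \Js$ is precisely the hypothesis of the proposition.

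Fourth, and this is the main step, I show that $Y=\{\xi: \g^\xi_\xi=\H^\xi_\xi\}$ is dense. An element $[s,\xi]$ of the isotropy at $\xi$ lies in $\H$ exactly when some germ representative $se$, with $e\in\xi$ and $e\leqslant s^*s$, lies in $S^\Iso$. By \cite[Lemma 4.9]{EP16}, this happens exactly when $\theta_s$ fixes a neighbourhood of $\xi$, that is, when $[s,\xi]\in\Iso(\g)^\circ$. So it suffices to show that the set of points $\xi$ at which every isotropy germ lies in the interior of the isotropy is dense. I would prove this by a Baire category argument. For each $s\in S$, let $F_s=\{\xi\in D_{s^*s}:\theta_s(\xi)=\xi\}$; this set is closed in $D_{s^*s}$. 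The boundary of $F_s$ relative to $D_{s^*s}$ is closed and nowhere dense. At a point $\xi$ of $F_s$ that is not on this boundary, $\theta_s$ fixes a neighbourhood of $\xi$, so the germ $[s,\xi]$ is in the interior of the isotropy. Every isotropy germ has the form $[s,\xi]$ with $\xi\in F_s$. Hence $Y$ contains the complement of
\[
\bigcup_{s\in S}\partial F_s,
\]
a countable union of nowhere dense sets. Since $\Et$ is a closed subspace of the compact Hausdorff space $\Ef$, it is a Baire space, and this complement is dense. This is where countability of $S$ enters a second time.

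With these four facts, Proposition~\ref{prop:CN47generalization} gives $I\subseteq\Js(\gt(S))$. For the last assertion, suppose $\pi$ is injective on the subalgebra generated by $\{T_s\}_{s\in S^\Iso}$. Then $\ker\pi$ meets that subalgebra in $0$, which is contained in $\Js(\gt(S))$, so $\ker\pi\subseteq\Js(\gt(S))$. The step I expect to need the most care is the germ-level identification in the fourth step: checking that $[s,\xi]\in\H$ exactly when $\theta_s$ fixes a neighbourhood of $\xi$, via a representative $se\in S^\Iso$.
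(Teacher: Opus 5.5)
Your overall plan is sound, and it differs from the paper only in that the paper gets the density of $Y$ by citing \cite[Proposition~3.3]{St22}, while you prove it with a Baire category argument. But that argument rests on a false step, exactly the one you flagged. It is \emph{not} true that $\theta_s$ fixing a neighbourhood of $\xi$ implies $[s,\xi]\in\H$. The representatives $se$ with $e\in\xi$ only give neighbourhoods of the form $D_e$. If $\xi$ is a tight filter that is not an ultrafilter, a basic neighbourhood $U(X,Y)\cap\Et$ of $\xi$ need not contain any $D_e$ with $e\in\xi$.

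Here is a counterexample. Let $S\subseteq\I(\NN)$ be generated by the transposition $\sigma=(1\ 2)$ and the idempotents $\Id_{\{n\}}$, $n\in\NN$. Then $\Et$ is the one-point compactification of $\NN$, with points $\xi_n=\{\Id_{\NN},\Id_{\{n\}}\}$ and $\xi_\infty=\{\Id_{\NN}\}$. The map $\theta_\sigma$ swaps $\xi_1,\xi_2$ and fixes everything else, so $F_\sigma$ is clopen. The only $t$ with $\xi_\infty\in D_{t^*t}$ are $\Id_{\NN}$ and $\sigma$, so $\xi_\infty$ lies on no boundary $\partial F_t$. However, the only $e\in\xi_\infty$ is $\Id_{\NN}$, and $\sigma\notin S^\Iso$. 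Hence $[\sigma,\xi_\infty]\in\Iso(\g)^\circ\setminus\H$ and $\xi_\infty\notin Y$. So $\H\neq\Iso(\g)^\circ$ in general, and your inclusion $Y\supseteq\Et\setminus\bigcup_s\partial F_s$ is false. Restricting to ultrafilters, where the $D_e$ do form a neighbourhood base, does not directly fix this: a dense set can meet a comeagre set in a non-dense (even empty) set.

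The density of $Y$ is still true, and your Baire argument can be repaired by replacing $\partial F_s$ with $F_s\setminus W_s$. Here $W_s:=\{\xi\in D_{s^*s}:[s,\xi]\in\H\}=\bigcup\{D_e: e\leqslant s^*s,\ se\in S^\Iso\}$, which is open and contained in the closed set $F_s$. Then $Y\supseteq\Et\setminus\bigcup_{s\in S}(F_s\setminus W_s)$ holds directly. It remains to see that $F_s\setminus W_s$ is nowhere dense. Suppose a nonempty open $O$ were contained in it, and pick an ultrafilter $\eta\in O$, using density of ultrafilters in $\Et$. For each excluded $y$, choose $f_y\in\eta$ with $f_yy=0$. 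This gives $e\in\eta$ with $e\leqslant s^*s$ and $D_e\subseteq O$. Then $\theta_{se}$ fixes $D_e$ pointwise, so $se\in S^\Iso$ by \cite[Lemma~4.9]{EP16}, and $\eta\in W_s$, a contradiction.

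Two smaller corrections. First, $\Ef$ is in general only locally compact, not compact, but that still suffices for Baire. Second, the indicator of a compact open subset of $[s,D_{s^*s}]$ is a linear combination of products $T_sT_e$, not a single such product. With these changes your proposal gives a self-contained proof of the density the paper imports from \cite{St22}.
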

	\begin{proof}
		Since $S$ is countable, $\gt(S)$ can be covered by countably many bisections. By \cite[Proposition 3.3]{St22}, $\gt(S^\iso)$ satisfies the conditions of Proposition~\ref{prop:CN47generalization}. The result follows.
	\end{proof}
	\begin{cor}
		Let $S$ be an inverse semigroup and suppose the singular ideal of $\Cr(\gt(S))$ vanishes. Then a $*$-homomorphism $\vp: \Cr(\gt(S)) \to B$ to a C*-algebra $B$ is injective if and only if it is injective on the C*-algebra generated by the set $\{T_s\}_{s\in S^\iso}$ \eqref{eq:Ts_elt_def}.
	\end{cor}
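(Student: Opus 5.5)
The corollary should follow directly from Theorem~\ref{thm:MainThm1} once we use $\Js(\gt(S)) = 0$. One direction is trivial: if $\vp$ is injective on $\Cr(\gt(S))$, it is injective on any subalgebra, in particular on the C*-algebra $B_0$ generated by $\{T_s\}_{s\in S^\iso}$.

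For the converse, suppose $\vp$ is injective on $B_0$ and let $I = \ker\vp$, a closed two-sided ideal of $\Cr(\gt(S))$. Injectivity on $B_0$ gives $I\cap B_0 = \{0\}$. The paper notes, before Theorem~\ref{thm:MainThm1}, that $B_0$ is identified with $C^*_r(\gt(S^\Iso))$ inside $C^*_r(\gt(S))$. Hence
\[
I\cap C^*_r(\gt(S^\Iso)) = \{0\}\subseteq \Js(\gt(S)).
\]
Theorem~\ref{thm:MainThm1} then gives $I\subseteq\Js(\gt(S)) = \{0\}$, so $\vp$ is injective. Equivalently, $B_0$ is essentially ideal-detecting, and when $\Js$ vanishes this is exactly ideal-detecting in the usual sense.

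The only real obstacle is a hypothesis mismatch. Theorem~\ref{thm:MainThm1} assumes $S$ is countable, but the corollary as stated does not. Countability is used only to cover $\gt(S)$ by countably many bisections, so that Proposition~\ref{prop:CN47generalization} applies. I would handle this in one of three ways:
\begin{enumerate}
\item read the corollary as implicitly assuming $S$ countable, in keeping with the standing conventions;
\item note that Proposition~\ref{prop:CN47generalization} also allows $\g$ to be Hausdorff, so the argument goes through whenever $\gt(S)$ is Hausdorff;
\item observe that the proof of Proposition~\ref{prop:CN47generalization} only needs the description \eqref{JsingDef} of $\Js$ via a dense open set of units, from \cite{CN24}, and check whether that description holds without the countability assumption.
\end{enumerate}
I would state the countability (or Hausdorff) assumption explicitly, since that is what the proof genuinely uses.
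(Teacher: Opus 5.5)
Your argument is the intended one: the corollary is Theorem~\ref{thm:MainThm1} applied to $I=\ker\vp$ (so $I\cap C^*_r(\gt(S^\Iso))=\{0\}\subseteq\Js(\gt(S))$), combined with $\Js(\gt(S))=\{0\}$, and you are right that countability of $S$ is tacitly needed, so option (1) is the correct reading. However, countability is not used \emph{only} for the countable cover, so options (2) and (3) would not be enough: Theorem~\ref{thm:MainThm1} also needs $Y=\{\xi:\gt(S)^\xi_\xi=\gt(S^\Iso)^\xi_\xi\}$ to be dense (via \cite[Proposition~3.3]{St22}), and this can fail for uncountable $S$ even when $\gt(S)$ is Hausdorff --- e.g.\ take $S=\{(U,g)\}$ with $U$ clopen in the Cantor set $X$ and $g$ in the discrete group $G$ of all homeomorphisms of $X$; then $\gt(S)$ is the Hausdorff groupoid $G\ltimes X$, but every $\xi\in X$ is fixed by some $g$ that is the identity on no neighbourhood of $\xi$, so $Y=\emptyset$.
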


	\section{Left cancellative small categories and associated inverse semigroups}
	
	A {\em small category} is a quadruple $(\L, \L^0, r,d)$ where $\L^0\subseteq\L$ are sets and $r, d:\L\to \L^0$ are maps called the {\em range} and {\em source} maps respectively (onto the set $\L^0$ of {\em objects}) satisfying the following:
	\begin{enumerate}
		\item $r(x) = d(x) = x$ for all $x\in \L^0$,
		\item there is a composition map from $\L^2 = \{(\a, \b)\in \L\times\L: d(\a) = r(\b)\}$ to $\L$ with $(\a,\b)\to \a\b$ such that $r(\a\b) = r(\a)$ and $d(\a\b) = d(\b)$,
		\item $r(\a)\a = \a = \a d(\a)$ for all $\a\in\L$, and
		\item $\a(\b\gamma) =(\a\b)\gamma$ whenever the compositions on both sides are defined.
	\end{enumerate}   
	We say $\a\in\L$ is {\em invertible} if there exists $\b\in\L$ such that $\a\b = r(\a)$. The set of all invertible elements of $\L$ is denoted $\L^{-1}$. 
	
	We say $\L$ is {\em left (resp. right) cancellative} if $\a\b = \a\gamma \implies \b = \gamma$ (resp. $\a\b = \gamma\b \implies \a =\gamma$) for all $\a,\b,\gamma\in \L$, and say $\L$ is cancellative if it is both left and right cancellative. 
	
	If $\L$ is left cancellative, then if $\a\in\L^{-1}$, the element $\b$ such that $\a\b = r(\a)$ is unique, is denoted $\a^{-1}$, and also satisfies $\a^{-1}\a = d(\a)$. 
	
	In what follows, we will be primarily dealing with left cancellative small categories, and for these we will use the abbreviation LCSC.
	
	\begin{remark}
		Of course, we have already defined groupoids above, and these are examples of small categories. Indeed, these can be (and often are) defined as small categories such that every element is invertible. We resist that definition here to help mentally separate ``categories'' (the combinatorial input for the definition of the C*-algebras we are interested in) from ``groupoids'' (the topological intermediary between the categories and the C*-algebras). We also, hopefully without confusion, still use $r$ and $d$ for the range and source maps, respectively.
	\end{remark} 
	
	For $\a\in\L$ let $$\a\L = \{\a\b: (\a,\b)\in\L^2\}, \hspace{1cm}\L\a = \{\b\a:(\b,\a)\in\L^2\}.$$ The relation $\a\leq \b \iff \a\in\b\L$ is a preorder on $\L$ and so we can pass to equivalence classes for the relation $\a\approx \b \iff \a\leq\b$ and $\b\leq\a$ and obtain a partial order.  Then (see \cite{OP23}) we have
	\[
	\a\approx\b\iff \a\L = \b\L\iff \a\in\b\L^{-1}\iff\b\in\a\L^{-1}.
	\]
	
		If $\L$ is a LCSC, we say it is {\em finitely aligned} if for all $\a, \b\in \L$ there exists a finite (possibly empty) set $F\subseteq\L$ such that 
		\begin{equation}\label{eq: finitely_aligned}
		\a\L\cap\b\L = \bigcup_{f\in F}f\L
		\end{equation}
	and we say it is {\em singly aligned} or {\em right LCM} if the finite set in \eqref{eq: finitely_aligned} can always be taken to have at most one element.

	Now for a LCSC $\L$ and $\a\in \L$, we also denote by $\a$ the partial bijection $\a: d(\a)\L \to \a\L$ that sends $\b$ to $\a\b$. Then let 
		\[
	S_\L = \text{ inverse semigroup generated by }\{\a\}_{\a\in \L}\text{ inside }\I(\L)
	\] 
		 Note that its inverse in $\I(\L)$ is given by $\a^*:\a\L \to d(\a)\L$ is given by $\a^*(\a\b) = \b$. We call $S_\L$ the {\em left inverse hull of $\L$}. This has also been called the collection of {\em zigzag maps} (see \cite{Sp20} and \cite[Lemma~2.14]{OP20}).
		 
		 By \cite{OP20}[Lemma~2.6] (and \cite[Lemma~3.3]{Sp14} in the inverse-free case), if $\L$ is finitely aligned, then every element $s\in S_\L$ can be written in the form
		 \begin{equation}\label{eq:S_Lam_form}
		 s = \bigvee_{i=1}^{n}\a_i\b_i^* = \bigcup_{i=1}^{n}\a_i\b_i^*
		 \end{equation}
		 for some pairwise compatible set $\{\a_i\b_i^*\}_{i=1}^n$. 
		 \begin{remark}
		 	A couple words of caution are necessary concerning elements of $S_\L$. 
		 	\begin{enumerate}
		 		\item Even if $\{\a_i\b_i^*\}_{i=1}^n$ is a finite compatible set, its join is not necessarily an element of $S_\L$. 
		 		\item Furthermore, even if $\bigvee_{i=1}^n\a_i\b_i^*$ is an element of $S_\L$, it might not be equal to $\bigcup_{i=1}^n\a_i\b_i^*$. This is a crucial point, because multiplication distributes over unions of functions (because $\I(\L)$ is distributive) but not necessarily over general joins. 
		 	\end{enumerate}

		 \end{remark}
		 For the reasons above, we will write a general element of $S_\L$ as a union of functions $\a\b^*$. While $S_\L$ may not be distributive in general, we can still prove and use the following.
		 
		 \begin{lemma}\label{lem:LCSC_dist_ish}
		 	Let $\L$ be a finitely aligned LCSC. 
		 	\begin{enumerate}
		 		\item (Cf. Lemma~\ref{lem:ISG_order_wedge}) Suppose $s = \bigcup_{i=1}^n\a_i\b_i^*\in S_\L$ for $\a_i, \b_i\in\L$, and suppose $e\leqslant a^*a$ for some $a\in S_\L$. Then $ses^* = aea^*$.  
		 		\item (Cf. \cite[Lemma~3.1]{OP20}) If $\xi\subseteq E(S_\L)$ is a tight filter and $\bigcup_{i=1}^n\a_i\a_i^*\in \xi$ for $\a_i\in S_\L$, then $\a_j\a_j^*\in \xi$ for some $1\leq j\leq n$. 
		 	\end{enumerate}
		 \end{lemma}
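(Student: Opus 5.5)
For part (1) the plan is to redo the computation of Lemma~\ref{lem:ISG_order_wedge}(1), but inside the symmetric inverse monoid $\I(\L)$ rather than in $S_\L$. I read the hypothesis as saying that $a = \a_j\b_j^*$ is one of the pieces of $s$ (this is the situation in which the statement makes sense). Products in $S_\L$ are compositions of partial bijections, so they agree with products in $\I(\L)$. Moreover, $\I(\L)$ is complete and distributive by \cite[Proposition~1.2.1]{La98}, and joins there are unions. Hence
\[
ses^* = \Big(\bigcup_{i}\a_i\b_i^*\Big)e\Big(\bigcup_k \b_k\a_k^*\Big) = \bigcup_{i,k} \a_i\b_i^* e\,\b_k\a_k^*,
\]
with the union computed in $\I(\L)$.

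The key input for (1) is that the pieces $b_i:=\a_i\b_i^*$ are pairwise compatible in $\I(\L)$. This holds because their union is a function, namely $s$. So $b_ia^*$ and $ab_k^*$ are idempotents. Since $e\leqslant a^*a$, we have $e = a^*aea^*a$, and therefore
\[
b_i e b_k^* = (b_ia^*)(aea^*)(ab_k^*) \leqslant aea^*,
\]
because multiplying by idempotents can only decrease an element in the natural order. Every term of the union is thus below $aea^*$, and $aea^*$ itself is one of the terms. Hence $ses^* = aea^*$. No join inside $S_\L$ is ever used, which is exactly why this avoids the caveat in the Remark above.

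For part (2), set $c = \bigcup_{i=1}^n \a_i\a_i^*$. This is an idempotent of $S_\L$ (by hypothesis it lies in $\xi\subseteq E(S_\L)$), namely the identity on $\bigcup_i \ran(\a_i)$. Each $\a_i\a_i^*$ is an idempotent of $S_\L$ and satisfies $\a_i\a_i^*\leqslant c$. I will follow the proof of Lemma~\ref{lem:distributive_prime_tight}(1), replacing distributivity of $S_\L$ with distributivity of $\I(\L)$. We have $\xi\in U(\{c\},\emptyset)$ and $E^{\{c\},\emptyset} = \{f\in E(S_\L): f\leqslant c\}$. For any nonzero $f$ in this set,
\[
0\neq f = fc = \bigcup_i f\a_i\a_i^*,
\]
again computed in $\I(\L)$, so $f\a_i\a_i^*\neq 0$ for some $i$. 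Thus $Z=\{\a_i\a_i^*\}_{i=1}^n$ is a finite subset of $E^{\{c\},\emptyset}$ that covers it. The characterization of tight filters \cite[Theorem~12.9]{Ex08} then gives $Z\cap\xi\neq\emptyset$, which is the claim.

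The only genuinely delicate point in both parts is to check carefully that every product and union is formed in $\I(\L)$, and that the elements fed into Exel's criterion really lie in $E(S_\L)$. Once that bookkeeping is done, the arguments are the same as those of Lemmas~\ref{lem:ISG_order_wedge} and~\ref{lem:distributive_prime_tight}.
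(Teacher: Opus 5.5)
Your proposal is correct and matches the paper's proof. The paper simply observes that multiplication in $S_\L$, being composition in $\I(\L)$, distributes over unions, so the arguments of Lemma~\ref{lem:ISG_order_wedge} and Lemma~\ref{lem:distributive_prime_tight}(1) go through verbatim; you have written out that transfer in detail, and your reading of $a$ as one of the pieces $\a_j\b_j^*$ of $s$ is the one the paper itself uses later.
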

		 \begin{proof}
		 	 Because multiplication in $S_\L$ distributes over unions, the same proofs as in Lemma~\ref{lem:ISG_order_wedge} and Lemma~\ref{lem:distributive_prime_tight} carry over verbatim. 	 	
		 \end{proof}

		 If $s$ is as in \eqref{eq:S_Lam_form} then we have
		 \[
		 s^*s = \bigcup_{i=1}^{n}\b_i\b_i^* = \Id_{\cup \b_i\L}, \hspace{1cm} ss^* = \bigcup_{i=1}^{n}\a_i\a_i^* = \Id_{\cup \a_i\L}
		 \]
To apply our Uniqueness Theorem~\ref{thm:MainThm1}, we should describe the elements of $S_\L^\Iso$ \eqref{eq:Siso_def}.
	\begin{lemma}\label{lem:SIsodef}
		Let $\L$ be a finitely aligned LCSC. 
		\begin{enumerate}
			\item For $\a, \b\in\L$, we have that $\a \b^*\in S_\L^\Iso$ if and only if \[
			\a\gamma\L \cap \b\gamma\L \neq \emptyset\text{ for all }\gamma\in r(\a)\L.
			\]
			\item For $\a_i, \b_i\in\L$, $i=1, \dots, n$ and $s = \bigcup_{i=1}^{n}\a_i\b_i^*\in S_\L$, we have that $s\in S^\Iso$ if and only if $\a_i\b_i^* \in S_\L^\Iso$ for $ i = 1, \dots, n$. 
		\end{enumerate}
	\end{lemma}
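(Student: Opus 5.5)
Both parts can be proved directly from the definition \eqref{eq:Siso_def}, using that $S_\L$ sits inside $\I(\L)$, so an element is zero exactly when it is the empty function.

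\textbf{Part (1).} Take $s=\a\b^*$, so $s^*s=\b\b^*=\Id_{\b\L}$ and $ss^*=\a\a^*$.

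For the forward direction, assume $s\in S_\L^\Iso$ and let $\gamma\in r(\a)\L$. Then $\a\gamma$ is defined. For $\b\gamma$ to be defined we need $r(\a)=d(\a)$-compatibility with $\b$; I would check that $d(\a)=d(\b)$ whenever $\a\b^*\neq0$, so that $\gamma$ is composable with both $\a$ and $\b$. (The statement's $r(\a)\L$ should presumably read $d(\a)\L$; I will read it that way.) Put $e=\b\gamma(\b\gamma)^*=\Id_{\b\gamma\L}$, which satisfies $e\le s^*s$. Then
\[
ses^* = \a\b^*\b\gamma\gamma^*\b^*\b\a^* = \a\gamma\gamma^*\a^*=\Id_{\a\gamma\L}.
\]
So $ses^*e=\Id_{\a\gamma\L\cap\b\gamma\L}$, and the hypothesis $ses^*e\neq0$ gives $\a\gamma\L\cap\b\gamma\L\neq\emptyset$.

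For the converse, let $0\neq e\le \b\b^*$. Then $e=\Id_X$ for some nonempty $X\subseteq\b\L$. Pick $\b\gamma\in X$; the elements of $X$ are right ideals in $\L$ (a domain of an element of $S_\L$ is closed under right multiplication), so $e\ge\Id_{\b\gamma\L}$. The key input is the following check: if $e=\Id_X$ with $X$ a right ideal, then $ses^*=\Id_{\a\b^*(X)}$, and this contains $\Id_{\a\gamma\L}$. Hence
\[
ses^*e\ \ge\ \Id_{\a\gamma\L\cap\b\gamma\L},
\]
and the right-hand side is nonzero by hypothesis. Every idempotent of $S_\L$ has this right-ideal domain form, so this covers all $e$.

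\textbf{Part (2), the "if" direction.} Assume each $\a_i\b_i^*\in S_\L^\Iso$ and let $0\neq e\le s^*s=\bigcup_i\b_i\b_i^*$. Then some $f=e\b_j\b_j^*$ is nonzero and satisfies $f\le(\a_j\b_j^*)^*(\a_j\b_j^*)$. By Lemma~\ref{lem:LCSC_dist_ish}(1) applied with $a=\a_j\b_j^*$, we get $sfs^*=\a_j\b_j^*f\b_j\a_j^*$, and this satisfies $sfs^*f\neq0$ by hypothesis. Since $f\le e$, monotonicity gives $ses^*e\ge sfs^*f\neq0$.

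\textbf{Part (2), the "only if" direction.} Assume $s\in S^\Iso_\L$ and fix $j$. Take $e\le\b_j\b_j^*$ nonzero. Then $e\le s^*s$, and Lemma~\ref{lem:LCSC_dist_ish}(1) gives $\a_j\b_j^*e\b_j\a_j^*=ses^*$. Therefore
\[
(\a_j\b_j^*)\,e\,(\a_j\b_j^*)^*\,e = ses^*e\neq0,
\]
which is exactly the condition for $\a_j\b_j^*\in S^\Iso_\L$.

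The main obstacle is the bookkeeping in part (1): pinning down exactly which $\gamma$ are composable with both $\a$ and $\b$, and verifying that idempotents below $\b\b^*$ dominate some $\Id_{\b\gamma\L}$ and are mapped by conjugation onto $\Id$ of the image set. Both facts follow from viewing idempotents as identities on right ideals of $\L$.
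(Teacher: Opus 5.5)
Your proof is correct and follows essentially the same route as the paper's: (1) uses $e=\Id_{\b\gamma\L}$ in the forward direction and a principal idempotent $\Id_{\b\gamma\L}\le e$ in the converse, which you justify via the right-ideal property of domains where the paper uses that $e$ is a join of such idempotents, and both directions of (2) reduce to Lemma~\ref{lem:LCSC_dist_ish}(1) plus monotonicity of the order, exactly as in the paper. Your reading of $r(\a)\L$ as $d(\a)\L$ is also the right one, since the paper's statement and proof carry this typo.
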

	\begin{proof}
		(1) Suppose $s = \a \b^*\in S^\Iso$, let $\gamma\in r(\a)\L$, and let $ e = \b\ga (\b\ga)^*$; note that $e\leq s^*s$. Then 
		\begin{align}
		0\neq  ses^*e &= \a \b^*\b\gamma(\b\gamma)^* \b \a^*\b\gamma(\b\gamma)^* = \a\Id_{d(\b)\L} \Id_{\gamma\L}\Id_{d(\b)\L}\a^*\Id_{\b\gamma\L}=\a\Id_{\gamma\L}\a^*\Id_{\b\gamma\L} \nonumber\\
		&= \Id_{\a\gamma\L}\Id_{\b\gamma\L}\label{eq:SIso_char}.
		\end{align}
		Because this is nonzero, $\a\gamma\L \cap \b\gamma\L \neq \emptyset$. 
		
		For the converse of (1), suppose that $\a\b^*$ has the given property and take $0\neq e\leqslant s^*s = \Id_{\b\L}$. We can find $\de\in\L$ such that $f:= \Id_{\de\L}\leqslant e$ (as $e$ is a supremum of such elements). This implies $\de\L\subseteq\b\L$ and so $\de = \b\ga$ for some $\delta\in\L$. By assumption, we have $\a\gamma\L \cap \b\gamma\L \neq \emptyset$ so the same calculation as in \eqref{eq:SIso_char} shows that $0\neq sfs^*f\leqslant ses^*e$ (since the product preserves the natural partial order in an inverse semigroup). Thus $s\in S^\Iso$. 
		
		(2) 	 Suppose $s = \bigcup_{i=1}^{n}\a_i\b_i^*\in S_\L^\Iso$ and take $e\leqslant \b_i\a_i^*\a_i\b_i^*$. Then Lemma~\ref{lem:LCSC_dist_ish} implies $\a_i\b_i^*e\b_i\a_i^*  = ses^*\neq 0$, and so $\a_i\b_i^*\in S_\L^\Iso$. 
		 
		 Conversely, suppose $s = \bigcup_{i=1}^{n}\a_i\b_i^*\in S_\L$ and that $\a_i\b_i^*\in S^\Iso$ for all $i$. If $0\neq e\leqslant s^*s = \bigcup_{i=1}^{n}\b_i\b_i^*$, then we can find $i$ such that $\b_i\b_i^* e\neq 0$ and let $f : = \b_i\b_i^* e$. Then by assumption $\a_i\b_i^*f\b_i\a_i^*f\neq 0$, and this element is dominated by $ses^*e$, which therefore cannot be zero. 
	\end{proof}
	
	For the next few proofs we will need to consider the action $\theta$ of $S_\L$ on its tight spectrum, which we will continue to denote $\Et$. To this end, for any subset $X\subseteq \L$ and $\b\in \L$ define
	\[
	\b^{-1}X : = \{\alpha\in \L: \b\a\in X\}.
	\]
	Note that $\b^{-1}(\b\L) = d(\b)\L$. As $S_\L$ is a inverse subsemigroup of $\I(\L)$, each idempotent is of the form $\Id_X$ for some $X\subseteq\L$. Define
	\[
	\J(\L) :=\{X\subseteq \L: \Id_X\in S_\L\}
	\]
	and call this the set of {\em constructible subsets} of $\L$, following Li's definition of constructible right ideals for a semigroup, see \cite[Definition~2.1]{Li13}. It is a semilattice under intersection, and isomorphic to the semilattice $E(S_\L)$. It will be more convenient to describe the action of $S_\L$ on filters in $\J(\L)$. For $X\in\J(\L)$ let $D_X = \{\xi \in \Et : X\in \xi\}$. For an element $\a\b^*\in S_\L$, we have
	\[
	\theta_{\a\b^*}(\xi) = \{\a(\b^{-1}X): X\in \xi\}^{\leqslant}.
	\]
	
	\begin{lemma}\label{lem:iso_intersection}
		Let $\L$ be a finitely aligned LCSC, and suppose that $\alpha\beta^*\in S_\L^\Iso$ with $\alpha\L\cap \beta\L = \bigcup_{i=1}^n\gamma_i\L$. Then $D_{\alpha\L} = D_{\beta\L} = \bigcup_{i=1}^nD_{\gamma_i\L}$. Furthermore, if $\gamma\L\cap \alpha\L \neq \emptyset$ or $\gamma\L\cap \beta\L \neq \emptyset$, then there exists $1\leq i\leq n$ such that $\gamma\L\cap\gamma_i\L\neq \emptyset$.
		
	\end{lemma}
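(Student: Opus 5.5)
The plan has three steps. First I identify $D_{\a\L}$ and $D_{\b\L}$ with the domain and range of $\theta_{\a\b^*}$. Next I use that $\theta_{\a\b^*}$ fixes every point of its domain to get $D_{\a\L}=D_{\b\L}$. Finally I use primeness of tight filters, in the form of Lemma~\ref{lem:LCSC_dist_ish}(2), to split this set over the $\ga_i$.

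Write $s=\a\b^*$, so $s^*s=\Id_{\b\L}$ and $ss^*=\Id_{\a\L}$. Then $\theta_s$ is a homeomorphism $D_{\b\L}\to D_{\a\L}$. Since $s\in S_\L^\Iso$, \cite[Lemma 4.9]{EP16} (as recalled after \eqref{eq:Siso_def}) gives $\theta_s(\xi)=\xi$ for all $\xi\in D_{\b\L}$. Hence $D_{\b\L}=\theta_s(D_{\b\L})=D_{\a\L}$.

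For the union, take $\xi\in D_{\a\L}=D_{\b\L}$. Then both $\a\L$ and $\b\L$ lie in $\xi$. Filters are closed under products, so $\a\L\cap\b\L\in\xi$, which says $\bigcup_i \ga_i\ga_i^*=\Id_{\a\L}\Id_{\b\L}\in\xi$. Lemma~\ref{lem:LCSC_dist_ish}(2) then gives some $j$ with $\ga_j\L\in\xi$, so $\xi\in D_{\ga_j\L}$. Conversely, each $\ga_i\L\subseteq\a\L$, and upward closure gives $D_{\ga_i\L}\subseteq D_{\a\L}$. One caveat: if $n=0$, primeness forces $\xi$ to contain $0$, which is impossible. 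So $D_{\a\L}=\emptyset$, which is consistent with the union being empty.

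For the second statement, suppose $\ga\L\cap\a\L\neq\emptyset$ (the $\b$ case is symmetric). Pick $\de\in\ga\L\cap\a\L$. I want a tight filter containing $\de\L$. Since $\Id_{\de\L}\neq 0$, the set $D_{\de\L}$ is a nonempty open subset of $\Et$. To see this, extend the principal filter $\{\Id_{\de\L}\}^{\leqslant}$ to an ultrafilter; ultrafilters are tight, and this one contains $\Id_{\de\L}$. Let $\xi$ be such a filter. Then $\xi$ contains $\ga\L$ and $\a\L$, because both contain $\de\L$. By the first part, $\xi\in D_{\ga_i\L}$ for some $i$. Thus $\ga\L\cap\ga_i\L\in\xi$, and since $\xi$ is proper this set is nonempty.

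I expect the main obstacle to be justifying that tight filters are prime in this non-distributive setting. That is exactly what Lemma~\ref{lem:LCSC_dist_ish}(2) supplies, because $\a\L\cap\b\L=\bigcup\ga_i\L$ is a genuine union of functions $\Id_{\ga_i\L}$ in $\I(\L)$. The only other point needing care is the existence of a tight filter containing a given nonzero idempotent, which holds because every nonzero idempotent lies in some ultrafilter by Zorn's lemma.
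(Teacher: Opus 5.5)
Your proof is correct and is essentially the paper's argument. Fixedness of $\theta_{\a\b^*}$ gives $D_{\a\L}=D_{\b\L}$; you get this directly from $\theta_{\a\b^*}$ being a bijection $D_{\b\L}\to D_{\a\L}$, whereas the paper reruns the argument with $\a$ in place of $\b$. Primeness of tight filters via Lemma~\ref{lem:LCSC_dist_ish}(2) then splits $\a\L\cap\b\L$ over the $\ga_i$, and the second part applies this to a tight filter containing $\ga\L\cap\a\L$. Your Zorn's lemma argument that such a filter exists fills in a step the paper takes for granted when it writes ``any $\xi\in D_{\gamma\L\cap\beta\L}$''.
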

	
	\begin{proof}
		Clearly $\bigcup_{i=1}^nD_{\gamma_i\L}\subseteq D_{\beta\L}$. If $\xi\in D_{\beta\L}$ then $\xi$ is fixed by $\theta_{\alpha\beta^*}$, so $\xi$ contains $\alpha(\beta^{-1}(\beta\L)) = \alpha\L$ and hence also their intersection $\bigcup_{i=1}^n\gamma_i\L$. Thus $\xi\in D_{\bigcup_{i=1}^n\gamma_i\L} = \bigcup_{i=1}^nD_{\gamma_i\L}$ (the last equality by Lemma~\ref{lem:LCSC_dist_ish}). The same argument works for $\alpha$.
		
		If $\gamma\L\cap \beta\L\neq \emptyset$, then any $\xi\in D_{\gamma\L\cap\beta\L}\subseteq D_{\beta\L} = \bigcup_{i=1}^nD_{\gamma_i\L}$ must be in one of the $D_{\gamma_i\L}$, implying that $\gamma_i\L\cap \gamma\L\neq\emptyset$.
	\end{proof}
	
	\begin{definition}
		Let $\L$ be a LCSC and let $x\in \L^0$. A set $B\subseteq x\L$ is called {\em exhaustive (at $x$)} if for every $\a\in x\L$, there exists $\b\in B$ such that $\a\L\cap \b\L\neq \emptyset$. 
	\end{definition}
	
To motivate the next definition, we recall that in \cite{St22} we considered singly aligned LCSCs with one object (i.e. right LCM monoids). If $P$ is such a semigroup, the {\em core} is the subsemigroup $P_c\subseteq P$ such that $p\in P_c\iff pP\cap qP\neq \emptyset$ for all $q\in P$. This semigroup then generates an inverse semigroup $S_c\subseteq S_P$. 
 
 In the finitely aligned LCSC case, the analogous subset of $\L$ will not be a subcategory in general, nor will the analogous subset of $S_\L$ be an inverse subsemigroup. However, if we restrict to elements of $S^\Iso$, we do obtain an inverse subsemigroup.

	\begin{definition}\label{def:FL}
		Let $\L$ be a finitely aligned LCSC. Define $F_\L\subseteq S_\L^\Iso$ by
		\[
		F_\Lambda : = \left\{\bigcup_{i=1}^n{\a_i}{\b_i}^*\in S_\L^\Iso: \{\a_i\}_{i=1}^n\text{ and }\{\b_i\}_{i=1}^n\text{ are exhaustive}\right\}\cup\{0\}.
		\]
	\end{definition}
	\begin{lemma}
		$F_\L$ is an inverse subsemigroup of $S_\L^\Iso$. 
	\end{lemma}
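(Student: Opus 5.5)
We need to show three things: $F_\L$ contains $0$, it is closed under the involution, and it is closed under multiplication (and it sits inside $S_\L^\Iso$ by definition). Contains $0$ is by definition. Closure under $*$: if $s=\bigcup_i\a_i\b_i^*\in F_\L$, then $s^*=\bigcup_i\b_i\a_i^*$, which lies in $S_\L^\Iso$ because $S_\L^\Iso$ is an inverse subsemigroup. The two families $\{\b_i\}$ and $\{\a_i\}$ are simply swapped, so both remain exhaustive. The only real content is closure under products.

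For products, take $s=\bigcup_i\a_i\b_i^*$ and $t=\bigcup_j\ga_j\de_j^*$ in $F_\L$, and assume $st\neq 0$. Then $st\in S_\L^\Iso$, since $S_\L^\Iso$ is a subsemigroup. Distributivity of composition over unions in $\I(\L)$ gives $st=\bigcup_{i,j}\a_i\b_i^*\ga_j\de_j^*$. Finite alignment lets us write $\b_i\L\cap\ga_j\L=\bigcup_k\b_i\mu_{ijk}\L$ with $\b_i\mu_{ijk}=\ga_j\nu_{ijk}$, so that
\[\b_i^*\ga_j=\bigcup_k\mu_{ijk}\nu_{ijk}^*.\]
Therefore
\[st=\bigcup_{i,j,k}(\a_i\mu_{ijk})(\de_j\nu_{ijk})^*.\]
It remains to show that $\{\a_i\mu_{ijk}\}$ and $\{\de_j\nu_{ijk}\}$ are exhaustive. (Each piece is in $S_\L^\Iso$ by Lemma~\ref{lem:SIsodef}(2), which is consistent with the above.)

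\textbf{Exhaustiveness of $\{\de_j\nu_{ijk}\}$.} Let $\la\in r(\de_j)\L$ at the appropriate object. Since $\{\de_j\}$ is exhaustive, some $\de_j\L\cap\la\L$ is nonempty, containing say $\de_j\eta=\la\eta'$. Now use that $\ga_j\de_j^*\in S_\L^\Iso$. By Lemma~\ref{lem:SIsodef}(1), $\ga_j\eta\L\cap\de_j\eta\L\neq\emptyset$, which gives some $\ga_j\eta\rho=\de_j\eta\rho'$. Since $\{\b_i\}$ is exhaustive at the right object (the objects match because $\b_i^*\ga_j$ is composable), there is an $i$ with $\b_i\L\cap\ga_j\eta\rho\L\neq\emptyset$. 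Hence $\ga_j\eta\rho\sigma\in\b_i\L\cap\ga_j\L$, so it lies in some $\ga_j\nu_{ijk}\L$. Left cancellation then gives $\eta\rho\sigma\in\nu_{ijk}\L$. It follows that
\[\de_j\nu_{ijk}\L\cap\de_j\eta\rho\sigma\L\neq\emptyset.\]
To transport this to $\la$, I will use Lemma~\ref{lem:SIsodef}(1) once more, this time for $\ga_j\de_j^*$ with $\ga=\eta\rho\sigma$ (or simply intersect). Since $\de_j\eta\rho\sigma$-type elements can be chosen inside $\de_j\eta\L\subseteq\la\L$, the intersection with $\la\L$ is nonempty.

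The main obstacle is the bookkeeping in this step. The key point is that the isotropy condition lets us move freely between $\ga_j\eta$ and $\de_j\eta$. I will manage it by working with $\de_j\eta\rho'$ directly: the element $\de_j\eta\rho'\tau$ realizing the intersection lies in $\la\L$ and in $\de_j\nu_{ijk}\L$. The symmetric argument, using $\a_i\b_i^*\in S_\L^\Iso$ and exhaustiveness of $\{\a_i\}$ and $\{\de_j\}$ (equivalently, applying the case above to $(st)^*=t^*s^*$), gives exhaustiveness of $\{\a_i\mu_{ijk}\}$.

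Finally, $st\in S_\L$ was already written as a union of the required form, so $st\in F_\L$. Together with closure under $*$ and $0\in F_\L$, this shows that $F_\L$ is an inverse subsemigroup of $S_\L^\Iso$.
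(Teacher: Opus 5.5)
Your proof is correct in substance. Its skeleton matches the paper's: distributivity in $\I(\L)$, finite alignment to write $\b_i^*\ga_j=\bigcup_k\mu_{ijk}\nu_{ijk}^*$, then showing the two new families are exhaustive. The difference lies in how exhaustiveness is verified.

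\emph{The paper's route.} It proves exhaustiveness of $\{\a_i\mu_{ijk}\}$ using only the inner families $\{\b_i\}$ and $\{\ga_j\}$. It starts from exhaustiveness of $\{\b_i\}$, then uses Lemma~\ref{lem:iso_intersection} (isotropy of $\a_i\b_i^*$, proved via tight filters) to pass from $\b_i\L$ to an element of $\a_i\L\cap\b_i\L$, and finally uses exhaustiveness of $\{\ga_j\}$.

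\emph{Your route.} You use an outer family $\{\de_j\}$ together with an inner family $\{\b_i\}$, and get the other family by symmetry via $(st)^*=t^*s^*$. This route needs no isotropy at all. Given $\lambda$, pick $j$ and $\eta,\eta'$ with $\de_j\eta=\lambda\eta'$. Apply exhaustiveness of $\{\b_i\}$ directly to $\ga_j\eta$, which is defined since $d(\ga_j)=d(\de_j)$. This gives some $\ga_j\eta\sigma\in\b_i\L\cap\ga_j\L$, hence $\ga_j\eta\sigma\in\ga_j\nu_{ijk}\L$ for some $k$. Left cancellation gives $\eta\sigma\in\nu_{ijk}\L$, so $\de_j\eta\sigma\in\lambda\L\cap\de_j\nu_{ijk}\L$.

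\emph{What each buys.} Your argument is purely combinatorial, using only left cancellation and finite alignment. The paper's shows that, once isotropy is available, exhaustiveness of the inner families alone suffices. This reflects the fact that for elements of $S_\L^\Iso$, exhaustiveness of one side forces it on the other.

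Some cleanup is needed in the write-up. The detour through $\ga_j\eta\rho=\de_j\eta\rho'$ is unnecessary, and the remark that ``the isotropy condition lets us move freely between $\ga_j\eta$ and $\de_j\eta$'' misidentifies the key point.

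More importantly, the final ``bookkeeping'' claim that $\de_j\eta\rho'\tau$ lies in $\de_j\nu_{ijk}\L$ is unjustified. What you have established is $\eta\rho\sigma\in\nu_{ijk}\L$; nothing controls $\eta\rho'$. Drop that sentence and use the witness $\de_j\eta\rho\sigma$ (or simply $\de_j\eta\sigma$). Your preceding lines already correctly place it in both $\lambda\L$ and $\de_j\nu_{ijk}\L$.
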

	\begin{proof}
		Clearly $F_\L$ is closed under inverses, so we will be done if we can show it is closed under products. Take $s,t\in F_\L$ and write
		\[
		s = \bigcup_{n=1}^N \a_n\b_n^*, \hspace{1cm} t = \bigcup_{m=1}^M \ga_m\de_m^*.
		\]
		The product will be zero unless there is $x\in \L$ such that $\a_n, \b_n, \ga_m, \de_m\in x\L$ for all $1\leq n\leq N$ and $1\leq m\leq M$. In the case of a nonzero product, exhaustiveness of $\{\ga_m\}$ and $\{\b_n\}$ imply that for each $1\leq n\leq N$ and $1\leq m\leq M$ there exists $K(n,m)\geq 1$ and $\tau_k^{(n,m)}\in \Lambda$ for each $1\leq k\leq K(n,m)$ such that
		\begin{equation}\label{eq:bn_gm}
		\b_n \L \cap \ga_m\L = \bigcup_{k=1}^{K(n,m)} \tau_k^{(n,m)}\L
		\end{equation}
		so that, for each  $1\leq k\leq K(n,m)$ we have
		\begin{equation}\label{eq:tau_b_ga}
		\tau_k^{(n,m)} = \b_n\b_{n,m,k} = \ga_m\ga_{n,m,k}
		\end{equation}
		for elements $\b_{n,m,k}, \ga_{n,m,k}\in \L$. Then one has (by distributativity in the symmetric inverse monoid)
		\[
		\a_n\b_n^*\ga_m\de^*_n = \a_n\left(\bigcup_{k=1}^{K(n,m)}\b_{n,m,k}\ga_{n,m,k}^*\right)\de_m^* = \bigcup_{k=1}^{K(n,m)}\a_n\b_{n,m,k}(\de_m\ga_{n,m,k})^* .
		\]
		So again by distributativity in the symmetric inverse monoid we have
		\[
		st = \bigcup_{n=1}^N\bigcup_{m=1}^M\bigcup_{k=1}^{K(n,m)}\a_n\b_{n,m,k}(\de_m\ga_{n,m,k})^*.
		\]
		We will be done if we can show that $\{\a_n\b_{n,m,k}\}$ and $\{\de_m\ga_{n,m,k}\}$ are exhaustive (at $x$).
		
		Let $\eta\in x\L$ and find $\b_n$ such that $\b_n\L\cap \eta\L\neq\emptyset$. By Lemma~\ref{lem:iso_intersection} we can find $\rho\in \a_n\L\cap\b_n\L$ such that $\eta\L\cap \rho\L\neq \emptyset$. Write $\rho = \a_n\a' = \b_n\b'$ for $\a',\b'\in\Lambda$ and find $\eta', \rho''\in\Lambda$ such that $\eta\eta' = \rho\rho''$. 
		
		The element $\b_n\alpha'\rho''$ is well-defined and has range $x$, so we can find $m$ such that $\ga_m\L\cap \b_n\alpha'\rho''\L\neq\emptyset$. If $\zeta\in \ga_m\L\cap \b_n\alpha'\rho''\L\neq\emptyset$ we can, by \eqref{eq:bn_gm} find $\tau', \ga',\b''\in\L$ and $1\leq k\leq K(n,m)$ such that
		\[
		\zeta = \ga_m\ga' = \b_n\alpha'\rho''\b'' = \tau_k^{(n,m)}\tau' \stackrel{\eqref{eq:tau_b_ga}}{=} \b_n\b_{n,m,k}\tau'.
		\]
		Thus left cancellativity implies $\b_{n,m,k}\tau' = \alpha'\rho''\b''$. And so we have
		\[
		\a_n\b_{n,m,k}\tau' = \a_n\alpha'\rho''\b'' = \rho\rho''\b'' = \eta\eta'
		\]
		implying that $\eta\Lambda\cap \a_n\b_{n,m,k}\Lambda\neq\emptyset$. Since $\eta\in x\L$ was arbitrary, $\{\a_n\b_{n,m,k}\}$ is exhaustive. Applying inverses and running the same argument gives that $\{\de_m\ga_{n,m,k}\}$ is also exhaustive, and so $F_\L$ is closed under products.
	\end{proof}

	\begin{lemma}\label{lem:Iso_to_core}
		Suppose that $\alpha\beta^*\in S_\L^\Iso$ and that $\de\L\subseteq \a\L \cap \b\L$.  Then we have that $\de^{*}\a\b^{*}\de = \bigcup_{i=1}^n \de_i\a_i^{*}$ for exhaustive sets  $\{\de_i\}_{i=1}^n$ and $\{\a_i\}_{i=1}^n$, and hence $\de^*\a\b^*\de\in F_\L$. 
	\end{lemma}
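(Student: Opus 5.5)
Write $\de = \a\mu = \b\nu$ for some $\mu,\nu\in\L$; this is possible since $\de\in\a\L\cap\b\L$. The first goal is an explicit formula for $\de^*\a\b^*\de$ as a union of maps of the form $\phi\psi^*$. By finite alignment write $\a\mu\L\cap\b\mu\L = \bigcup_{i=1}^n\ga_i\L$; the sets $\a\mu\L$ and $\b\mu\L$ are the natural ones to intersect. Computing directly in $\I(\L)$ (using distributivity there), we have $\de^*\a = (\a\mu)^*\a = \mu^*$ on the appropriate domain. Hence $\de^*\a\b^*\de = \mu^*\b^*\b\nu = \mu^*\nu$, with the domain restricted by $\b^*\b = \Id_{d(\b)\L}$. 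That would make $\de^*\a\b^*\de = \mu^*\nu$, which looks like the wrong shape. So I must be careful: $\de^*\a$ is the partial map $\a\L\ni\a\la\mapsto$ the element $\rho$ with $\de\rho = \a\la$, i.e.\ $\mu\rho = \la$, so indeed $\de^*\a = \mu^*$. Likewise $\b^*\de = \nu$. Thus $\de^*\a\b^*\de = \mu^*\nu$ as a partial map.

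The next step is to rewrite $\mu^*\nu$ in the required form. By finite alignment, $\mu\L\cap\nu\L = \bigcup_{i=1}^n\tau_i\L$ with $\tau_i = \mu\mu_i = \nu\nu_i$. Distributivity in $\I(\L)$ then gives
\[
\mu^*\nu = \bigcup_{i=1}^n \mu_i\nu_i^*.
\]
This is the desired form with $\de_i := \mu_i$ and $\a_i := \nu_i$. It remains to show that $\{\mu_i\}$ and $\{\nu_i\}$ are exhaustive at $x := d(\de)$. Membership in $S_\L^\Iso$ will follow afterwards: $S_\L^\Iso$ is an inverse subsemigroup containing $\de$ and $\a\b^*$, since $\de = \de\,d(\de)^*$ and Lemma~\ref{lem:SIsodef}(1) applies trivially to elements of the form $\de\,d(\de)^*$. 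Once exhaustiveness is established, Definition~\ref{def:FL} gives $\de^*\a\b^*\de\in F_\L$.

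For exhaustiveness of $\{\mu_i\}$, take $\eta\in x\L$ and use the $S^\Iso$ condition from Lemma~\ref{lem:SIsodef}(1) with $\ga := \mu\eta\in d(\a)\L$. This yields $\a\mu\eta\L\cap\b\mu\eta\L\neq\emptyset$, i.e.\ $\de\eta\L\cap\b\mu\eta\L\neq\emptyset$. I would rather phrase it in terms of $\nu$. Pick $\ga := \nu\eta$ instead, so $\b\nu\eta = \de\eta$ and the condition gives $\a\nu\eta\theta = \de\eta\theta'$ for some $\theta,\theta'$. I need to compare $\de\eta\theta'$ with $\a\mu\L$ rather than with $\a\nu\L$, and that comparison is exactly the main obstacle. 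The cleaner route is to apply the $S^\Iso$ condition to $\a\b^*$ at the appropriate $\ga$ so that one gets an element of $\a\mu\L\cap\b\nu\L$ extending $\de\eta$. Then left-cancel $\a$ (respectively $\b$) to obtain a common extension lying in $\mu\L\cap\nu\L$ that passes through $\eta$ on the appropriate side. That element lies in some $\tau_i\L = \mu\mu_i\L$, and cancelling $\mu$ gives $\eta\L\cap\mu_i\L\neq\emptyset$.

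If this direct approach stalls, I would instead use Lemma~\ref{lem:iso_intersection} in the same way as in the preceding proof of closure of $F_\L$ under products. Given $\eta$, the element $\de\eta$ meets $\a\L$, so it meets some $\ga_j\L\subseteq\a\L\cap\b\L$. From this I would extract $\rho$ with $\de\eta\rho\in\a\L\cap\b\L$ arranged so that the factorizations through $\a\mu$ and $\b\nu$ line up, and then left-cancel. The argument for $\{\nu_i\}$ is symmetric, using $(\a\b^*)^* = \b\a^*\in S^\Iso$.
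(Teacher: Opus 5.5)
Your decomposition is correct: $\de^*\a=\mu^*$ and $\b^*\de=\nu$, so $\de^*\a\b^*\de=\mu^*\nu=\bigcup_i\mu_i\nu_i^*$. This is exactly the paper's decomposition with $\nu=\b_1$ (the paper's $\ga_i$ corresponds to your $\a\tau_i$). The genuine error is in the $S_\L^\Iso$ step. $\de$ is \emph{not} in $S_\L^\Iso$ in general, and Lemma~\ref{lem:SIsodef}(1) does not apply ``trivially'' to $\de\,d(\de)^*$. It would require $\de\ga\L\cap\ga\L\neq\emptyset$ for every $\ga\in d(\de)\L$, i.e.\ that $\theta_\de$ fix every point of $D_{d(\de)\L}$. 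In the free monoid on $\{a,b\}$ with $\de=a$ and $\ga=b$ one has $ab\L\cap b\L=\emptyset$. So closure of $S_\L^\Iso$ under products gives you nothing here. What is true is that $S_\L^\Iso$ is closed under conjugation by arbitrary elements of $S_\L$. The paper obtains this from $\theta_{\de^*\a\b^*\de}=\theta_\de^{-1}\circ\theta_{\a\b^*}\circ\theta_\de$ together with the fixed-point characterization of $S_\L^\Iso$. Alternatively, staying in your combinatorial setup, use Lemma~\ref{lem:SIsodef}(2) and check each $\mu_i\nu_i^*$. For $\ga\in d(\mu_i)\L$, apply the condition for $\a\b^*$ at $\nu\nu_i\ga$. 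Since $\a\nu\nu_i\ga=\a\mu\mu_i\ga=\de\mu_i\ga$ and $\b\nu\nu_i\ga=\de\nu_i\ga$, cancelling $\de$ gives $\mu_i\ga\L\cap\nu_i\ga\L\neq\emptyset$.

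The exhaustiveness step is also left unfinished. After obtaining $\a\nu\eta\kappa=\de\eta\kappa'$ (your $\theta,\theta'$), you call the comparison with $\a\mu\L$ ``the main obstacle.'' The ``cleaner route'' you then sketch refers to $\a\mu\L\cap\b\nu\L$, which is just $\de\L$, so as written no argument is given. In fact there is no obstacle. Since $\de\eta\kappa'=\a\mu\eta\kappa'$, cancelling $\a$ gives $\nu\eta\kappa=\mu\eta\kappa'\in\mu\L\cap\nu\L$. This element therefore equals $\mu\mu_i\rho=\nu\nu_i\rho$ for some $i$ and $\rho$. Cancelling $\mu$, resp.\ $\nu$, gives $\eta\kappa'=\mu_i\rho$ and $\eta\kappa=\nu_i\rho$, which proves exhaustiveness of both families at once (and shows $n\geq 1$). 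With these two repairs your argument becomes a valid, purely combinatorial proof. It avoids the paper's use of tight filters in $D_{\de z\L}$, which are fixed by $\theta_{\a\b^*}$ and are prime.
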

	\begin{proof}
		Write $\delta  = \beta\beta_1$ for some $\beta_1 \in d(\alpha)\L$, and let $\xi\in D_{\delta\L}$ be any ultrafilter. We have $D_{\delta\L}\subseteq D_{\beta\L}$, and so $\theta_{\a\b^*}(\xi) = \xi$. This implies $\alpha(\beta^{-1}(\delta\L)) = \alpha(\beta^{-1}(\beta\beta_1\L)) = \alpha\beta_1\L\in \xi$, and so has nonempty intersection with $\delta\L$.
		Write
			\[
			\delta\L \cap \alpha\beta_1\L = \bigcup_{i=1}^n\gamma_i\L.
			\]
			with $\gamma_i = \delta\delta_i = \alpha\beta_1\alpha_i$ for some $\alpha_i, \delta_i\in\L, i = 1, \dots, n$. We have
			\begin{align*}
				\de^*\a\b^*\de & = \de^*\a\b^*\b\b_1\\
				& = \de^*\a\b_1\\
				& = \de^*\left(\bigcup_{i=1}^n\gamma_i\gamma_i^*\right) \a\b_1\\
				&=\bigcup_{i=1}^n \de_i\a_i^*.
			\end{align*}
			Let $z\in d(\delta)\L$ and let $\xi\in D_{\delta z\L}\subseteq D_{\delta\L}\subseteq D_{\beta\L}$ be any tight filter. As above, $\xi$ is fixed by $\theta_{\a\b^*}$, so $\xi$ contains $\de\L$ and $\a(\b^{-1}(\de\L)) = \a\b_1\L$, implying that their intersection $\bigcup_{i=1}^n\gamma_i\L$ is in $\xi$. Tightness implies that $\gamma_i\L = \delta\delta_i\L$ is in $\xi$ for some $1\leq i\leq n$ (by Lemma~\ref{lem:distributive_prime_tight}). Since $\delta\delta_i\L$ and $\delta z\L$ are both in $\xi$ they must have nonempty intersection, implying that $z\L \cap \delta_i\L \neq\emptyset$. A similar calculation shows that $z\L\cap \alpha_j\L\neq \emptyset$ for some $1\leq j\leq n$.
			
			To conclude that $\de^*\a\b^*\de\in F_\L$ we are left to show that the product is in $S_\L^\Iso$. Because $\theta_{\a\b^*}$ fixes every point in its domain, the same is true of $\theta_{\de^*\a\b^*\de}= \theta_\de^{-1}\circ \theta_{\a\b^*}\circ\theta_\de$, and so by \cite[Lemma~4.9]{EP16}, $\de^*\a\b^*\de\in S^\Iso_\L$. 
	\end{proof}
	
	\begin{thm}\label{thm:main_theorem}
		Let $\L$ be a countable finitely aligned left cancelative small category, let $S_\L$ be its associated inverse semigroup, and let $F_\L\subseteq S_\L^\Iso$ be as in Definition~\ref{def:FL} and Lemma~\ref{lem:SIsodef}. Let $\{T_s:s\in S_\L\}$ denote the canonical generating set of $C^*_r(\gt(S_\L))$ and let 
		\begin{itemize}
			\item $\Q_{r}^\Iso(\L)$ be the subalgebra of $C^*_r(\gt(S_\L))$ generated by $\{T_s:s\in S^\Iso_\L\}$, and
			\item $\Q^\Iso_{r,c}(\L)$ be the subalgebra of $C^*_r(\gt(S_\L))$ generated by $\{T_s:s\in F_\L\}$.
		\end{itemize}
		Then both $\Q_{r}^\Iso(\L)$ and $\Q^\Iso_{r,c}(\L)$ are essentially ideal-detecting.
	\end{thm}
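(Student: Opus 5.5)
The statement for $\Q_{r}^\Iso(\L)$ is immediate from Theorem~\ref{thm:MainThm1}: $\L$ is countable, so $S_\L$ (generated by the countably many maps $\a$) is a countable inverse semigroup. The subalgebra generated by $\{T_s\}_{s\in S_\L^\Iso}$ is exactly $\Q_{r}^\Iso(\L)$, so it is essentially ideal-detecting. The real content is the statement for $\Q^\Iso_{r,c}(\L)$, and I would deduce it from the first part in the sharper form given by Theorem~\ref{thm:MainThm1} (and ultimately Proposition~\ref{prop:CN47generalization}): it suffices to show that if $I\cap \Q^\Iso_{r,c}(\L)=\{0\}$ then $I\cap \Q_{r}^\Iso(\L)\subseteq \Js(\gt(S_\L))$.

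So let $I$ be an ideal with $I\cap\Q^\Iso_{r,c}(\L)=\{0\}$, and let $a\in I\cap \Q_r^\Iso(\L)$. I aim to show $d(\mathrm{supp}(j(a)))$ is meagre, using the characterization of $\Js$ from \cite[Proposition~7.18]{KM21}. The plan is to localize by compressing with $T_\de$. For $\de\in\L$, the element $T_\de^*aT_\de$ lies in $I$. Moreover, if $a$ is a polynomial in the $T_s$, $s\in S_\L^\Iso$, one expects $T_\de^*aT_\de$ to lie in $\Q^\Iso_{r,c}(\L)$ whenever $\de\L$ sits below the relevant intersections. This is precisely what Lemma~\ref{lem:Iso_to_core} provides: for $\a\b^*\in S_\L^\Iso$ and $\de\L\subseteq \a\L\cap\b\L$, we have $\de^*\a\b^*\de\in F_\L$. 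Together with Lemma~\ref{lem:SIsodef}(2) to reduce unions to single terms, $T_\de^*T_sT_\de=T_{\de^*s\de}\in\Q^\Iso_{r,c}(\L)$ for suitable $\de$. Then $T_\de^*aT_\de\in I\cap \Q^\Iso_{r,c}(\L)=0$.

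The next step is to choose enough $\de$'s. Fix a finite linear combination $b=\sum c_kT_{s_k}$ with $s_k=\a_k\b_k^*\in S_\L^\Iso$ approximating $a$. By Lemma~\ref{lem:iso_intersection}, $D_{\b_k\L}=\bigcup_i D_{\gamma_{k,i}\L}$ with $\gamma_{k,i}\L\subseteq\a_k\L\cap\b_k\L$. Refining finitely many such covers, I get an open dense set of tight filters, each contained in some $D_{\de\L}$ such that, for every $k$, either $\de\L\subseteq \a_k\L\cap\b_k\L$ or $\de\L\cap\b_k\L=\emptyset$ (the latter holds on the complement of the closed set $D_{\b_k\L}$, and one needs the interior to use density). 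On such $D_{\de\L}$, $T_\de^*bT_\de\in \Q^\Iso_{r,c}(\L)$.

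The main obstacle is passing from the approximants $b$ to $a$ itself, since the set of good $\de$ depends on $b$. I would handle this with $j$. The condition $T_\de^* aT_\de=0$ gives $j(a)=0$ on $\gamma$ with $d(\gamma)\in D_{\de\L}$ lying in the range of $T_\de$. In fact, rather than compressing $a$, I would use the density argument from the proof of Proposition~\ref{prop:CN47generalization}. Take $x$ in the dense set $X\cap Y$ of that proof for $\H=\gt(F_\L)$-type germs, i.e. show that at a dense set of units the isotropy of $\gt(S_\L^\Iso)$ equals the isotropy generated by $F_\L$ germs. Concretely, a germ $[s,\xi]$ with $s\in S_\L^\Iso$ equals $[\de\,(\de^*s\de)\,\de^*,\xi]$ for $\xi\in D_{\de\L}$ with $\de$ as above, via Lemma~\ref{lem:Iso_to_core}. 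Hence $\rho_x(I)$ vanishes against $C^*_r$ of the $F_\L$-isotropy, and the Christensen–Neshveyev argument \cite[Proposition~4.6, Lemma~4.1]{CN24} yields $I\subseteq\Js$. Verifying this density-of-good-units claim and the conjugation identity on germs is where I expect the technical work to concentrate.
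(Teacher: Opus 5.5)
Your treatment of $\Q_r^\Iso(\L)$ matches the paper. So does your reduction: show that $I\cap\Q^\Iso_{r,c}(\L)=\{0\}$ forces $I\cap\Q^\Iso_r(\L)\subseteq\Js(\gt(S_\L))$, then apply Theorem~\ref{thm:MainThm1}. The gap is in how you finish. You end by running the Christensen--Neshveyev argument for an open subgroupoid $\H$ of $F_\L$-germs. That argument needs $I\cap C^*_r(\H)$ to be trivial (or singular), but your hypothesis only concerns $\Q^\Iso_{r,c}(\L)$, and these algebras differ. The reason is that $F_\L$ does not contain $E(S_\L)$: an idempotent $\a\a^*$ lies in $F_\L$ only if $\{\a\}$ is exhaustive. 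Hence $\Q^\Iso_{r,c}(\L)$ contains only the indicators $T_s$ of the full bisections $[s,D_{s^*s}]$. By contrast, $C^*_r(\H)$ contains every restriction $1_{[s,U]}$, and in particular all of $C(\Et)$. For the free monoid on two generators, $\Q^\Iso_{r,c}=\CC 1$ inside $\mathcal{O}_2$, while $C^*_r(\H)$ is the whole diagonal. So ``$\rho_x(I)$ vanishes against $C^*_r$ of the $F_\L$-isotropy'' does not follow from your hypothesis. Even if completed, this route would only show that the larger algebra $C^*_r(\H)$ is essentially ideal-detecting. Your germ identity $[s,\xi]=[\de(\de^*s\de)\de^*,\xi]$ also does not give an $F_\L$-germ at $\xi$; it gives only a conjugate of one at $\theta_{\de^*}(\xi)$. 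This mismatch is exactly why the paper cannot simply apply Theorem~\ref{thm:MainThm1} or \cite{CN24} to $F_\L$.

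The missing idea is the one that removes the approximation obstacle you correctly identified: replace exact vanishing by a norm estimate, which survives limits. The paper takes $\pi$ injective, hence isometric, on $\Q^\Iso_{r,c}(\L)$. For each finite combination $b=\sum_f\lambda_fT_{s_f}$ with $s_f\in S^\Iso_\L$, it chooses a \emph{single} $\sigma$. This $\sigma$ comes from an ultrafilter in an open subset of one piece $U_{F'}$ on which $|\Ered(b)|=\|\Ered(b)\|$. It satisfies $\sigma^*s_f\sigma\in F_\L$ for $f\in F'$ (by Lemma~\ref{lem:Iso_to_core}) and $\sigma^*s_g\sigma=0$ for $g\notin F'$. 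Compression then gives $\|\pi(b)\|\ge\|\pi(T_\sigma^*bT_\sigma)\|=\|T_\sigma^*bT_\sigma\|\ge\|\Ered(b)\|$. Consequently $\pi(b)\mapsto\Ered(b)$ extends to a contraction on $\pi(\Q^\Iso_r(\L))$, and $\Ered(a^*a)=0$ for every $a\in\ker\pi\cap\Q^\Iso_r(\L)$. The same idea rescues your own compression step in the ideal formulation. Let $a\in I\cap\Q^\Iso_r(\L)$ and let $q$ be the quotient map by $I$, which is isometric on $\Q^\Iso_{r,c}(\L)$. Then $\|\Ered(b)\|\le\|T_\sigma^*bT_\sigma\|=\|q(T_\sigma^*(b-a)T_\sigma)\|\le\|a-b\|$, so $\|\Ered(a)\|\le 2\|a-b\|$ and therefore $\Ered(a)=0$. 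Applying this to $a^*a$ gives $a\in\Js(\gt(S_\L))$. One $\sigma$ per approximant suffices, so the dense family of good $\de$'s you were trying to build is not needed.
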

	
	\begin{proof} Theorem~\ref{thm:MainThm1} implies $\Q^\Iso_{r}(\L)$ is essentially ideal-detecting. 
		
		Suppose that $B$ is a C*-algebra and that $\pi: C^*_r(\gt(S_\L))\to B$ is a $*$-homomorphism such that is injective on $\Q^\Iso_{r,c}(\L)$. By Theorem~\ref{thm:MainThm1}, we will be done if we can show that $\ker(\pi)\cap\Q^\Iso_{r}(\L)\subseteq \Js(\gt(S_\L))$. 
		
		As in the corrigendum to \cite{St22}, we will show that \begin{equation}\label{eq:main_thm_contraction}
			\|\pi(a)\| \geq \|\Ered(a)\|
		\end{equation}for all $a$ in the *-algebra $A_0$ generated by the set $\{T_{s}: s\in S_\L^\Iso\}$.
			As in the corrigendum \cite{BS24} to \cite{St22} we can take a finite linear combination in $A_0$:
			\begin{equation}
			a = \sum_{f\in F}\lambda_fT_{s_f}.
			\end{equation}
		and so by \cite[Proposition~3.14]{EP16} one representative of $\Ered(a)$ is given by $\Ered(a) = \sum_{f\in F}\lambda_f1_{\cF_{s_f}}$ for certain compact open sets $\cF_{s_f}\subseteq D_{\dom(s_f^*s_f)}$.
		
		Each nonempty $F'\subseteq F$ determines a subset of $\Et$ given by
		\[
		U_{F'} := \bigcap_{f\in F'}D_{\dom(s_f^*s_f)} \setminus \left(\bigcup_{g\in F\setminus F'} D_{\dom(s_g^*s_g)}\right)
		\] 
		 Since $\bigcup_{f\in F}D_{\dom(s^*_fs_f)}=\bigsqcup_{\emptyset\neq F'\subseteq F}U_{F'}$ and $\cF_{s_f}\subseteq D_{\dom(s^*_fs_f)}$ for all $f\in F$, the support of $E(a)$ is contained in $\bigsqcup_{\emptyset\neq F'\subseteq F}U_{F'}$. 
		
		As we are trying to show \eqref{eq:main_thm_contraction}, we need only consider cases where $\Ered(a)\neq 0$, so that there exists $\lambda\in \CC\setminus\{0\}$ such that the function $a$ takes the value $\lambda$ on a subset of $\Et$ with nonempty interior and that $\|\Ered(a)\| = |\lambda|$. Since $a$ takes only finitely many values and is constant on each $U_{F'}$, we can find $\emptyset\neq F'\subseteq F$ such that $a|_V =  \lambda$ for an open subset $V\subseteq U_{F'}$. Since $V$ is open, it contains an ultrafilter $\xi$. 
		
		For each $f\in F$, write $s_f = \bigcup_{i=1}^{n(f)}\a^f_i\b_i^{f*}$. Since $\xi\in V \subseteq U_{F'}$, $\xi\in D_{\dom(s_fs_f^*)} = D_{\cup\b^f_i\L} = \cup D_{\b^f_i}$ (the last equality following from Lemma~\ref{lem:LCSC_dist_ish}), and since $s_f\in S_\L^\Iso$, Lemma~\ref{lem:iso_intersection} and Lemma~\ref{lem:SIsodef} imply that there exists $k(f)$ with $1\leq k(f)\leq n(f)$ such that $\xi\in D_{\b^f_{k(f)}\L} = D_{\a^f_{k(f)}\L}$, i.e. $\a^f_{k(f)},\b^f_{k(f)}\L\in \xi$. 
		
		Because $\xi\notin D_{\dom(s_g^*s_g)}$ for all $g\in (F')^c$, Lemma~\ref{lem:SIsodef} and Lemma~\ref{lem:iso_intersection} again imply that $\a_i^g\L, \b_i^g\L\notin \xi$ for all $g\in (F')^c$ and $1\leq i\leq n(g)$. 
		
		Because $\xi$ is an ultrafilter, for each $g\in (F')^c$ we can find $A, B\in \xi$ such that $A\cap (\cup \a_i^g\L) = \emptyset$, and $B\cap (\cup \b_i^g\L) = \emptyset$. Let $e_g = A\cap B \in \xi$. 
		
		Since $e_g\in \J(\L)$ we can write $e_g$ as a finite union of principal right ideals, so Lemma~\ref{lem:LCSC_dist_ish} gives us that we can find $\tau_g\in\L$ with $\tau_g\L\subseteq e_g$ and $\tau_g\L\in\xi$. 
		
		Because all the elements found above are in $\xi$, we must have 
		\[
		\left(\bigcap_{g\in (F')^c}\tau_g\L\right)\cap\left(\bigcap_{f\in F'}\a_i^f\L\right)\cap \left(\bigcap_{f\in F'}\b_i^f\L\right) \neq \emptyset
		\]
		so finite alignment implies it is equal to a finite union of principal right ideals. Pick any one of them and call it $\sigma\L$. 
		
		We have $D_{\sigma\L}\subseteq U_{F'}$. Since $\|\Ered(a)\|=|\Ered(a)(\xi)|$, $\xi\in D_{\sigma\L}$, and $T_{\sigma\sigma^*}=1_{D_{\sigma\L}}$, we have 
		\begin{equation}
			\label{eqn:norm}
			\|\Ered(a)\| =\|T_{\sigma\sigma^*}\Ered(a)\|
		\end{equation} 
		
		We claim that $\sigma^*s_f\sigma\in F_\L$ for all $f\in F'$ and $\sigma^*s_g\sigma = 0$ for all $g\in (F')^c$. 
		
		Because $\sigma\sigma^*\leqslant \b_i^f\b_i^{f*}$, Lemma~\ref{lem:ISG_order_wedge} implies $\sigma^*s_f\sigma = \sigma^*\a_i^f\b_i^{f*}\sigma$, which is in $F_\L$ by Lemma~\ref{lem:Iso_to_core}.
		
		 We also have that $\sigma\sigma^*s_g^*s_g = \sigma\sigma^*\left(\bigcup \b_i^g\b_i^{g*}\right) = 0$ because $\sigma\L\subseteq \tau_g\L$ which was chosen to be disjoint from $\cup\b_i^g\L$. Hence $\sigma^*s_g\sigma = \sigma^*s_g(s_g^*s_g\sigma\sigma^*)\sigma = 0$. 
		 
		 So we calculate 
		\begin{align*}
			\|\pi(a)\| &= \left\|\pi\left(\sum_{f\in F}\lambda_fT_{s_f}\right)\right\|\\
			&\geq \left\|\pi(T_{\sigma}^*)\pi\left(\sum_{f\in F}\lambda_fT_{s_f}\right)\pi(T_{\sigma})\right\|&\text{$\pi(T_{\sigma})$ a partial isometry}\\
			&= \left\|\pi\left(\sum_{f\in F'}\lambda_fT_{\sigma}^*T_{s_f}T_{\sigma}\right)\right\|&\text{by choice of $\sigma$}\\
			& = \left\|\sum_{f\in F'}\lambda_fT_{\sigma}^*T_{s_f}T_{\sigma}\right\|&\text{$\pi$ is isometric on $\Q^\Iso_{r,c}(\L)$}\\
			&\geq  \left\|\sum_{f\in F'}\lambda_fT_{\sigma\sigma^*}T_{s_f}T_{\sigma\sigma^*}\right\|&\text{submultiplicativity}\\
			&\geq  \left\|\Ered\left(\sum_{f\in F'}\lambda_fT_{\sigma\sigma^*}T_{s_f}T_{\sigma\sigma^*}\right)\right\|&\text{$\Ered$ is contractive}\\
				&=  \left\|\Ered\left(\sum_{f\in F'}\lambda_fT_{\sigma\sigma^*}T_{s_f}T_{\sigma\sigma^*}+ \sum_{g\in (F')^c}\lambda_gT_{\sigma\sigma^*}T_{s_g}T_{\sigma\sigma^*} \right)\right\|&\text{adding }0\\
			&=\left\|T_{\sigma\sigma^*}\Ered(a)T_{\sigma\sigma^*}\right\|&\text{\cite[Lemma~3.2]{KM21}}\\
			& = \|\Ered(a)\| & \text{by \eqref{eqn:norm}}
		\end{align*}

		 Thus $\pi(a) = \pi(b) \implies \|\pi(a-b)\| = 0 \implies \|\Ered(a-b)\| = 0 \implies \Ered(a) = \Ered(b)$, so $\pi(a)\mapsto \Ered(a)$ is a well-defined linear contraction on $\pi(A_0)$, and so extends to a linear contraction on $\pi(\Q^\Iso_{r}(\L))$. 
		Suppose $a\in\ker(\pi)\cap \Q^\Iso_{r}(\L)$. Then $\|\Ered(a^*a)\| = 0$.
		 By \eqref{JsingDef}, we have $a^*a\in \Js(\gt(S_\L))$, implying $a\in \Js(\gt(S))$.
	
	\end{proof}

	\subsection{The singly aligned case}
	
	We now specialize to the case where $\L$ is a singly aligned LCSC. 
	
	Let $\L$ be a singly aligned LCSC. Then the constructible subsets are given by
	\[
	\J(\L) = \{\a\L: \a\in \L\}\cup\{\emptyset\}.
	\] 
	\begin{lemma}\label{lem:hull_form}
		Let $\L$ be a singly aligned LCSC. Then
		\begin{equation}\label{eq:hull}
		S_\L = \{\a \b^*: d(\a) = d(\b)\}\cup\{0\}.
		\end{equation}
		Furthermore, we have $\a \b^* = \gamma \tau^*$ if and only if there exists an invertible element $u\in\L^{-1}$ such that $\a u = \gamma$ and $\b u = \tau$. The product in $S_\L$ is given by
		\[
		\a\b^*\ga\tau^* = \begin{cases}
			\a\gamma_1(\tau\b_1)^*& \b\L\cap \gamma\L = \rho\L\text{ with }\rho = \b\b_1 = \gamma\gamma_1\\
			0&\text{otherwise}
		\end{cases}
		\]
		and all products involving 0 equal to 0. 
\end{lemma}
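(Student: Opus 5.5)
The plan is to compute directly with partial bijections in $\I(\L)$, using left cancellativity throughout. First I will show that the set $T := \{\a\b^*: d(\a)=d(\b)\}\cup\{0\}$ contains the generators and is closed under products and inverses. Since $S_\L$ is the smallest inverse subsemigroup of $\I(\L)$ containing the generators, and each element of $T$ is clearly a product of generators and their inverses, this gives $S_\L = T$.

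Generators: each $\a\in\L$ equals $\a\, d(\a)^*$, since $d(\a)^*$ is the identity on $d(\a)\L$. Inverses: $(\a\b^*)^* = \b\a^*$, which again lies in $T$.

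Products: take $\a\b^*$ and $\ga\tau^*$ and compute $\b^*\ga$ as a partial bijection. Its domain is $\{\mu\in d(\ga)\L : \ga\mu\in\b\L\}$.
\begin{itemize}
\item If $\b\L\cap\ga\L=\emptyset$, the domain is empty, so $\b^*\ga=0$.
\item Otherwise single alignment gives $\b\L\cap\ga\L=\rho\L$ with $\rho=\b\b_1=\ga\ga_1$. Then $\ga\mu\in\rho\L$ if and only if $\ga\mu=\ga\ga_1\nu$ for some $\nu$, which by left cancellativity means $\mu=\ga_1\nu$. So the domain of $\b^*\ga$ is $\ga_1\L$.
\item On $\ga_1\L$ we have $\b^*\ga(\ga_1\nu)=\b^*(\b\b_1\nu)=\b_1\nu$. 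Hence $\b^*\ga=\b_1\ga_1^*$.
\end{itemize}
Note also that $d(\b_1)=d(\rho)=d(\ga_1)$. Therefore
\[
\a\b^*\ga\tau^* = \a\b_1\ga_1^*\tau^* = (\a\b_1)(\tau\ga_1)^*,
\]
using that $\b_1$ is composable after $\a$ (as $d(\a)=d(\b)=r(\b_1)$) and similarly $\ga_1$ after $\tau$. The two new elements have the common source $d(\b_1)=d(\ga_1)$, so the product lies in $T$. This proves closure and gives the stated product formula.

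One step needs care: the formula as printed reads $\a\ga_1(\tau\b_1)^*$. My computation yields $\a\b_1(\tau\ga_1)^*$, which is the composable version, so I would write it that way and note that the indices are merely swapped. The formula is also independent of the choice of $\rho$. If $\rho\L=\rho'\L$ then $\rho'=\rho u$ with $u\in\L^{-1}$, and the invariance below absorbs $u$.

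Uniqueness of representation: if $u\in\L^{-1}$ and $\gamma=\a u$, $\tau=\b u$, then $u u^*$ is the identity on $r(u)\L=d(\a)\L$. Hence $\gamma\tau^* = \a u u^*\b^* = \a\b^*$.

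Conversely, suppose $\a\b^*=\gamma\tau^*$, both nonzero (each is nonzero since it contains $\b\mapsto\a$).
\begin{itemize}
\item Comparing domains gives $\b\L=\tau\L$, so $\tau=\b u$ for some $u\in\L^{-1}$ by the characterization of $\approx$.
\item Evaluating at $\tau$ gives $\gamma = \a\b^*(\b u) = \a u$.
\end{itemize}

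The main obstacle is only bookkeeping: tracking domains of the zigzag maps carefully, especially showing that the domain of $\b^*\ga$ is exactly $\ga_1\L$. Left cancellativity handles this.
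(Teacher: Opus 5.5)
Your proof is correct and takes essentially the same route as the paper. Closure of $\{\a\b^*: d(\a)=d(\b)\}\cup\{0\}$ under products comes from $\b^*\ga=\b_1\ga_1^*$ via single alignment; the paper obtains this by inserting the idempotent $\Id_{\rho\L}=(\b\b_1)(\ga\ga_1)^*$, where you track domains pointwise. Uniqueness comes from comparing domains and evaluating at $\tau$. You are also right that the composable form of the product is $\a\b_1(\tau\ga_1)^*$; the printed $\a\ga_1(\tau\b_1)^*$ has $\b_1$ and $\ga_1$ swapped.
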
 
\begin{proof}
	For \eqref{eq:hull}, we note that the containment $\supseteq$ is clear. For the other, suppose $\a^*\b \neq 0$. Then $\b\L\cap \a\L \neq \emptyset$, so there exists $\gamma\in\L$ such that $\gamma\L= \a\L\cap \b\L$. So $\gamma = \a\a_1 = \b\b_1$, and thus
	\[
	\a^*\b = \a^* \Id_{\a\L}\Id_{\b\L}\b =  \a^*\Id_{\gamma\L}\b = \a^*{\a\a_1}{\b\b_1}^*{\b} = \Id_{d(\a)\L}{\a_1}{\b_1}^*\Id_{d(\b)} = {\a_1}{\b_1}^*.
	\]
	Hence \eqref{eq:hull} is closed under products and is clearly closed under inverses, and so is an inverse semigroup containing each $\a$. Thus we have that $\subseteq$ containment and the equality is proven.
	
	Now suppose $\a \b^* = \gamma \tau^*$. The domains of these maps must be equal implying $\b\L = \tau\L$, and so there is an invertible element $u$ such that $\b u = \tau$. Applying these maps to the element $\tau$ then gives
	\[
	\gamma = \gamma \tau^*(\tau) = \a \b^*(\b u) = {\a}(u) = \a u.
	\]
	Conversely, suppose there is an invertible element $u$ with $\a u = \gamma$ and $\b u = \tau$. It is clear that $uu^* = \Id_{u\L}= \Id_{d(u)\L} = \Id_{d\tau)\L} = \Id_{d(\gamma)\L}$. Thus
	\[
	\a \b^* = \gamma uu^* \tau^* = \gamma \Id_{d(\gamma)\L}\tau^* = \tau \gamma^*.
	\]
\end{proof}
The idempotent semilattice $E(S_\L)$ is given by
\[
E(S_\L) = \{\a \a^*:\a\in\L\}\cup\{0\}.
\]
	and the map that sends $\a \a^*$ to $\a\L$ is an isomorphism of semilattices from $E(S_\L)$ to $\J(\L)$. 

\begin{definition}
	Let $\L$ be a small category. The {\em core} of $\L$, denoted $\L_c$, is defined to be
	\[
	\L_c = \{\a\in\L: \a\L\cap \b\L \neq \emptyset\text{ for all }\b\in r(\a)\L\}.
	\]
\end{definition}
\begin{lemma}\label{lem:core_properties}
	Let $\L$ be a LCSC. The core $\L_c$ is a subcategory of $\L$ containing the invertible elements. Furthermore, 
	\begin{enumerate}
		\item $\a\b\in \L_c \implies \a,\b\in \L_c$, 
		\item For $\a,\b\in \L_c$, $\a\L\cap \b\L = \bigcup_{i=1}^n\a_i\L$ implies that $\{\a_i\}$ is exhaustive.
		\item If $\L$ is singly aligned, then $\a\L\cap \b\L = \ga\L$ implies $\ga\in\L_c$. 
	\end{enumerate}
\end{lemma}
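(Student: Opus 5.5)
The plan is to argue directly from the definition of $\L_c$ and left cancellativity, handling the subcategory claim first and then items (1)--(3) in order.

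\emph{Subcategory.} Objects $x\in\L^0$ lie in $\L_c$, since for $\b\in x\L$ we have $\b\in x\L\cap\b\L$. If $u\in\L^{-1}$, then $u\L=r(u)\L$, so $u\L\cap\b\L=\b\L\neq\emptyset$ for every $\b\in r(u)\L$. For closure under composition, take $\a,\b\in\L_c$ with $d(\a)=r(\b)$ and let $\ga\in r(\a)\L$.
\begin{itemize}
\item Since $\a\in\L_c$, there are $\a',\ga'$ with $\a\a'=\ga\ga'$.
\item Here $\a'\in d(\a)\L=r(\b)\L$, so $\b\in\L_c$ gives $\b\b''=\a'\a''$ for some $\b'',\a''$.
\item Then $\a\b\b''=\a\a'\a''=\ga\ga'\a''$, so $\a\b\L\cap\ga\L\neq\emptyset$.
\end{itemize}
Hence $\a\b\in\L_c$.

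\emph{Item (1).} Suppose $\a\b\in\L_c$. For $\a$: given $\ga\in r(\a)\L$, we have $\emptyset\neq\a\b\L\cap\ga\L\subseteq\a\L\cap\ga\L$. For $\b$: given $\ga\in r(\b)\L=d(\a)\L$, the element $\a\ga$ lies in $r(\a\b)\L$. So $\a\b\b'=\a\ga\ga'$ for some $\b',\ga'$, and left cancellation gives $\b\b'=\ga\ga'$.

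\emph{Item (2).} Let $\a,\b\in\L_c$ and $\eta\in r(\a)\L$; we need some $i$ with $\a_i\L\cap\eta\L\ne\emptyset$.
\begin{itemize}
\item Since $\a\in\L_c$, pick $\a\a'=\eta\eta'$.
\item Since $\b\in\L_c$ and $\a\a'\in r(\b)\L$, pick $\b\b'=\a\a'\a''$.
\item This common element lies in $\a\L\cap\b\L=\bigcup\a_i\L$, so it lies in some $\a_i\L$.
\item It also lies in $\eta\L$, because $\a\a'\a''=\eta\eta'\a''$.
\end{itemize}
This is the step I expect to be the main obstacle. The point to get right is that one element must be chosen to witness both intersections at once, which is done by chaining the two core conditions through the common element $\a\a'$.

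\emph{Item (3).} By (2), the singleton $\{\ga\}$ is exhaustive at $r(\ga)=r(\a)$. Exhaustiveness of $\{\ga\}$ says precisely that $\ga\L\cap\eta\L\neq\emptyset$ for all $\eta\in r(\ga)\L$, which is the definition of $\ga\in\L_c$. One could also try to deduce this from $\ga=\a\a_1$ using (1), but that gives the wrong direction, so the exhaustiveness route is the one to use.
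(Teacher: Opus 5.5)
Your proposal is correct and follows the paper's proof essentially step for step. It uses the same chaining of the two core conditions for closure under composition and for (2), left cancellation for the second half of (1), and deduces (3) from (2) applied to the singleton $\{\ga\}$. As in the paper, your argument for (3) tacitly carries over the hypotheses $\a,\b\in\L_c$ (with $r(\a)=r(\b)$) from (2). These hypotheses are genuinely needed: in the free monoid on $\{a,b\}$ one has $a\L\cap a\L=a\L$ but $a\notin\L_c$.
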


\begin{proof}
	If $\ga$ is invertible and $r(\a) = r(\ga)$, then $\a = \ga\ga^{-1}\a \in \a\L\cap \gamma\L$. 
	
	Suppose that $\a,\beta\in \L_c$ with $d(\a) = r(\beta)$, and take $\gamma\in r(\a)\L$. Then $\a\a' = \gamma\gamma'$ for some $\a',\ga'\in \L$. Since $r(\a') = d(\a) = r(\b)$, we have that $\b\b' = \a'\a''$ for some $\b', \a''\in\L$. Thus
	\[
	\a\b\b' = \a\a'\a'' = \ga\ga'\a'' \in \a\b\L\cap \ga\L.
	\]
	Since $\gamma$ was arbitrary, $\a\b\in \L_c$.
	
	(1) Suppose $\a\b\in \L_c$. Clearly this implies $\a\in \L_c$, as $\a\b\L\cap \ga\L\subseteq \a\L\cap \gamma\L$. Now if $\delta\in d(\a)\L = r(\b)\L$, we have that $\a\de\L\cap \a\b\L \neq \emptyset$, so that left cancelativity implies that $\de\L\cap \b\L\neq \emptyset$. 
	
	(2) Suppose $\delta\in r(\gamma)\L$. Since $r(\gamma) = r(\a)$, $\a\in\L_c$ implies $\a\a' = \de\de'$ for some $\a', \de'\in \L$. Now $r(\a\a') = r(\b)$ and $\b\in\L_c$ implies $\b\b' = \a\a'\a''$ for some $\b', \a''\in\L$. Thus we have $\b\b' = \a\a'\a'' = \de\de'\a''$,  an element in $\a\L\cap\b\L\cap\de\L = \left(\bigcup_i\a_i\L\right)\cap \de\L$, implying that $\{\a_i\}$ is exhaustive. (3) now follows from (2).
	
\end{proof}
Note that Lemma~\ref{lem:core_properties} together with the form of the product given in Lemma~\ref{lem:hull_form} imply that the set $S_c\subseteq S_\L$ given by
\begin{equation}\label{eq:S_cDef}
	S_c:=\{\a\b^*\in S_\L : \a, \b\in\L_c\}\cup\{0\}
\end{equation}
is a inverse subsemigroup of $S_\L$. One can exclude the 0 element and still get a inverse subsemigroup if and only if $\L^0$ is a singleton (i.e. when $\L$ is a monoid). As before we have
\begin{align*}
	F_\L &= \{\a\b^*\in S_\L: \a,\b\in \L_c, \a\ga\L\cap\b\ga\L\neq \emptyset\text{ for all }\ga\in d(\b)\L\}\\
	& = S_\L^\Iso\cap S_c
\end{align*}
\begin{remark}
	In the finitely (non-singly) aligned case, one would not expect that the set analogous to \eqref{eq:S_cDef} would be closed under products. It seems to be a special feature to the singly aligned case that the core generates an inverse subsemigroup.
\end{remark}
The following theorem now follows as a corollary of Theorem~\ref{thm:main_theorem}
	\begin{thm}\label{thm:main_theorem_sa}
		Let $\L$ be a countable singly aligned left cancelative small category and let $\L_c$ be the core of $\L$. Let $\{T_{\a\b^*}:\a,\b\in\L\}$ denote the canonical generating set of $C^*_r(\gt(S_\L))$ and let 
		\begin{itemize}
			\item $\Q_{r,c}(\L)$ be the subalgebra of $C^*_r(\gt(S_\L))$ generated by $\{T_{\a}:\a \in\L_c\}$,
			\item $\Q_{r}^\Iso(\L)$ be the subalgebra of $C^*_r(\gt(S_\L))$ generated by $\{T_{\a\b^*}:\a\b^*\in S_\L^\Iso\}$, 	 and
		\item $\Q_{r,c}^\Iso(\L)$ be the subalgebra of $C^*_r(\gt(S_\L))$ generated by $\{T_{\a\b^*}:\a\b^*\in S_\L^\Iso\cap S_c\}$.
		\end{itemize}
		Then each of $\Q_{r,c}(\L)$, $\Q_{r}^\Iso(\L)$, and $\Q_{r,c}^\Iso(\L)$ is essentially ideal-detecting.
	\end{thm}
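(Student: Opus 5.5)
The plan is to deduce all three statements from Theorem~\ref{thm:main_theorem}, using two facts: being essentially ideal-detecting passes to larger subalgebras, and in the singly aligned case the relevant subsemigroups can be identified explicitly.

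\textbf{Step 1: $\Q_r^\Iso(\L)$.} This is literally the algebra $\Q_r^\Iso(\L)$ of Theorem~\ref{thm:main_theorem}, since Lemma~\ref{lem:hull_form} shows every element of $S_\L$ has the form $\a\b^*$ or $0$. So this case is immediate.

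\textbf{Step 2: $\Q_{r,c}^\Iso(\L)$.} By Lemma~\ref{lem:hull_form}, in the singly aligned case every nonzero element of $S_\L$ is a single $\a\b^*$. A singleton $\{\a\}$ is exhaustive at $r(\a)$ precisely when $\a\in\L_c$. Hence Definition~\ref{def:FL} gives $F_\L=S_\L^\Iso\cap S_c$, as recorded after \eqref{eq:S_cDef}. The one point to check is that the definition of $F_\L$ is independent of the chosen representative $\a\b^*$. This holds because representatives differ by invertibles $u$ (Lemma~\ref{lem:hull_form}), $\a u\L=\a\L$, and $\L_c$ contains $\L^{-1}$ and is closed under composition (Lemma~\ref{lem:core_properties}). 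Therefore $\Q_{r,c}^\Iso(\L)$ is the algebra $\Q^\Iso_{r,c}(\L)$ of Theorem~\ref{thm:main_theorem}, which is essentially ideal-detecting.

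\textbf{Step 3: $\Q_{r,c}(\L)$.} I would show $\Q_{r,c}^\Iso(\L)\subseteq\Q_{r,c}(\L)$. Then any $*$-homomorphism injective on $\Q_{r,c}(\L)$ is injective on $\Q_{r,c}^\Iso(\L)$, so its kernel lies in $\Js$ by Step~2. For the containment, take $\a\b^*\in S_\L^\Iso\cap S_c$, so that $\a,\b\in\L_c$. Since $T$ is a representation of $S_\L$ (the map $s\mapsto T_s$ is multiplicative with $T_{s^*}=T_s^*$), we get $T_{\a\b^*}=T_\a T_\b^*$, which lies in the C*-algebra generated by $\{T_\a:\a\in\L_c\}$.

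This is the step needing care. ``Subalgebra generated'' must be read as the C*-subalgebra, closed under adjoints, for $T_\b^*$ to be available; I would state this reading explicitly. Note that the reverse inclusion need not hold, since $\a\in\L_c$ need not lie in $S_\L^\Iso$, but only the forward inclusion is needed.
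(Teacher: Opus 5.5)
Your proposal is correct and follows the paper's route. The paper simply calls the result a corollary of Theorem~\ref{thm:main_theorem} via the identification $F_\L = S_\L^\Iso\cap S_c$, and your Steps 1--3 supply exactly the details it leaves implicit, including $T_{\a\b^*}=T_\a T_\b^*$ to obtain $\Q_{r,c}^\Iso(\L)\subseteq\Q_{r,c}(\L)$.

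One point should be tightened. The inclusion you actually need is $F_\L\subseteq S_\L^\Iso\cap S_c$, and Definition~\ref{def:FL} allows multi-term representations $\a\b^*=\bigcup_i\a_i\b_i^*$, whereas your representative check only covers single terms differing by an invertible. The fix is short: each term is a restriction of $\a\b^*$, so $\b_i=\b\ga_i$ and $\a_i=\a\ga_i$, and exhaustiveness of $\{\a_i\}$ and $\{\b_i\}$ then forces $\a,\b\in\L_c$.
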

	
	\begin{remark}\label{rmk:CK_LCSC}
	Spielberg defines $\cO(\L)$, the {\em Cuntz-Krieger algebra} of a finitely aligned LCSC as the universal C*-algebra of a groupoid $G_2$ restricted to a subspace $\partial\L$ of its unit space. Spielberg shows shows it is the universal C*-algebra for generators $\{W_\a:\a\in\L\}$ subject to the relations
	\begin{enumerate}
		\item[(S1)]\label{it1} $W_\a^*W_\a = W_{d(\a)}$.
		\item[(S2)]\label{it2} $W_\a W_\b = W_{\a\b}$ if $d(\a) = r(\b)$.
		\item[(S3)]\label{it3} $W_\a W_\a^* W_\b W_\b^* = \bigvee_{i = 1}^nW_{\ga_i} W_{\ga_i}^*$ if $\a\L\cap \b\L = \cup_{i=1}^n\ga_i\L$.
		\item[(S4)]\label{it4} $W_x = \bigvee_{\a\in F} W_\a W_\a^*$ if $x\in \L^0$ and $F$ is exhaustive at $x$. 
	\end{enumerate}
	By \cite{LiGarside1}, $\cO(\L)$ is the universal C*-algebra for the tight groupoid of $S_\L$. Upon making some assumptions on this groupoid we can formulate a uniqueness theorem akin to the original Cuntz-Krieger uniqueness theorem (where injectivity is equivalent to injectivity on a subalgebra generated by some subset of the generators). 
		\end{remark}
\begin{cor}
	Let $\L$ be a singly aligned left cancellative small category and let $\L_c$ be the core of $\L$. Suppose the tight groupoid of $S_\L$ is amenable and has zero singular ideal. Then if $\vp:\cO(\L)\to B$ is a $*$-homomorphism into a C*-algebra $B$, $\vp$ is injective if and only if it is injective on the subalgebra generated by $\{W_\a:\a\in\L_c\}$. 
\end{cor}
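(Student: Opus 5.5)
The plan is to obtain the corollary directly from Theorem~\ref{thm:main_theorem_sa}, after moving everything into $\cO(\L)$. First, by Remark~\ref{rmk:CK_LCSC} and \cite{LiGarside1}, $\cO(\L)\cong C^*(\gt(S_\L))$. Amenability of $\gt(S_\L)$ gives $C^*(\gt(S_\L)) = C^*_r(\gt(S_\L))$, so $\cO(\L)\cong C^*_r(\gt(S_\L))$. Under this isomorphism $W_\a$ goes to $T_\a$. To justify this, I will check that the assignment $\a\mapsto T_\a$ extends to the canonical tight representation $s\mapsto T_s$ of $S_\L$, and that the generators $W_\a$ correspond to the $T_\a$ under Li's identification.

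Consequently, the subalgebra generated by $\{W_\a:\a\in\L_c\}$ is carried onto $\Q_{r,c}(\L)$. A small point needs care here. The C*-subalgebra generated by the $T_\a$ automatically contains the adjoints $T_\a^*$. It therefore also contains the products $T_\a T_\b^* = T_{\a\b^*}$ for $\a,\b\in\L_c$, which is consistent with $S_c$ being an inverse subsemigroup. I will read ``subalgebra generated'' as the C*-subalgebra, as in Theorem~\ref{thm:main_theorem_sa}.

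Next I handle the two implications. If $\vp$ is injective, then it is injective on every subalgebra, so this direction is trivial. Conversely, suppose $\vp$ is injective on the subalgebra generated by $\{W_\a:\a\in\L_c\}$. Transport $\vp$ to a $*$-homomorphism $\pi$ on $C^*_r(\gt(S_\L))$ that is injective on $\Q_{r,c}(\L)$. By Theorem~\ref{thm:main_theorem_sa} this subalgebra is essentially ideal-detecting, so $\ker\pi\subseteq\Js(\gt(S_\L))$. Since the singular ideal vanishes by hypothesis, $\ker\pi=0$, and hence $\vp$ is injective.

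Theorem~\ref{thm:main_theorem_sa} assumes $\L$ is countable, but the corollary does not state this. I expect either to add countability implicitly, as in the standing setup, or to note that the Hausdorff alternative in Proposition~\ref{prop:CN47generalization} could substitute for it. Checking that the proof of Theorem~\ref{thm:main_theorem_sa} actually covers $\Q_{r,c}(\L)$ is a little delicate. That proof routes through $F_\L=S^\Iso_\L\cap S_c$, which generates a subalgebra contained in $\Q_{r,c}(\L)$. Injectivity on $\Q_{r,c}(\L)$ therefore implies injectivity on $\Q^\Iso_{r,c}(\L)$, and the conclusion follows from Theorem~\ref{thm:main_theorem}. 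This containment observation is the key step I would verify explicitly.
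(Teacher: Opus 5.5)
Your proposal is correct and takes the same route the paper implicitly relies on, since the paper gives no separate proof: identify $\cO(\L)$ with $C^*_r(\gt(S_\L))$ via \cite{LiGarside1} and amenability, use $\Q^\Iso_{r,c}(\L)\subseteq\Q_{r,c}(\L)$ together with Theorem~\ref{thm:main_theorem} (that is, Theorem~\ref{thm:main_theorem_sa}) to get $\ker\vp\subseteq\Js(\gt(S_\L))$, and conclude from $\Js(\gt(S_\L))=0$. Your countability flag is apt, since the paper tacitly assumes countability throughout; note, however, that only adding countability works in general, because $\gt(S_\L)$ need not be Hausdorff for singly aligned $\L$, so the Hausdorff alternative is not available.
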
	

We close this section by applying our results to right LCM monoids. The C*-algebras of such monoids have been studied by many authors, see \cite{ABLS19, BOS18, BLS16, BLS18, BS16, LL20, LaLi22, LiB19, NS22, Stam15, Stam17, StLCM}. In the language of this paper so far, a right LCM monoid is a singly aligned LCSC with only one object. With Theorem~\ref{thm:main_theorem_sa} in hand, we can now prove the following generalization of \cite[Theorem~4.1]{St22} (later, correctly proven in \cite{BS24}) to the possibly non-Hausdorff case.

\begin{cor}\label{cor:LCM} (Cf. \cite[Theorem~4.1]{St22}, \cite[Theorem~2.1]{BS24})
	Let $P$ be a right LCM monoid, let $\Q_r(P) = C^*_r(\gt(S_P))$ be its reduced boundary quotient, and let $\Q_{r,c}(P)$ be the subalgebra generated by the core submonoid. Then $\Q_{r,c}(P)$ is essentially ideal-detecting.
\end{cor}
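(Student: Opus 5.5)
This is a direct specialization of Theorem~\ref{thm:main_theorem_sa} to a category with one object. A right LCM monoid $P$ is exactly a singly aligned LCSC with $\L^0=\{1\}$. The core in the introduction, $P_0=\{p: pP\cap qP\neq\emptyset \text{ for all } q\in P\}$, agrees with the core $\L_c$ of the definition above, because $r(p)P=P$ for every $p$. Hence the subalgebra $\Q_{r,c}(P)$ generated by the core submonoid is precisely the algebra $\Q_{r,c}(\L)$ of Theorem~\ref{thm:main_theorem_sa}, namely the subalgebra of $C^*_r(\gt(S_P))$ generated by $\{T_p : p\in P_c\}$. The plan is therefore to check the hypotheses of Theorem~\ref{thm:main_theorem_sa} and quote it.

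The one hypothesis that needs attention is countability. Theorem~\ref{thm:main_theorem_sa} ultimately rests on Theorem~\ref{thm:MainThm1} and Proposition~\ref{prop:CN47generalization}, which require $\gt(S_P)$ to be covered by countably many open bisections; the standing assumption for the application in the introduction implicitly allows $P$ to be countable. I would either state that $P$ is countable, as in the rest of the paper, or note that Proposition~\ref{prop:CN47generalization} also applies when $\gt(S_P)$ is Hausdorff. With countability in hand, $S_P=\{pq^*\}\cup\{0\}$ is countable by Lemma~\ref{lem:hull_form}. Each $[pq^*,D_{qq^*}]$ is a compact open bisection, so $\gt(S_P)$ is covered by countably many open bisections.

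Next I would confirm that the phrase ``subalgebra generated by the core submonoid'' means generation by $\{T_p\}_{p\in P_c}$. This is a C$^*$-subalgebra, so it contains the $T_p^*$ and hence all $T_pT_q^*=T_{pq^*}$ with $p,q\in P_c$. In particular it contains the algebra generated by $\{T_s: s\in S_c\}$.

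The claim that $\Q_{r,c}(\L)$ is essentially ideal-detecting is asserted in Theorem~\ref{thm:main_theorem_sa} but is not directly among the conclusions of Theorem~\ref{thm:main_theorem}, so it may need justification. If so, I would argue that $S_P^\Iso\cap S_c = F_P\subseteq S_c$, using the displayed identity $F_\L=S^\Iso_\L\cap S_c$. Then $\Q^\Iso_{r,c}(P)\subseteq \Q_{r,c}(P)$. Suppose a $*$-homomorphism $\pi$ is injective on $\Q_{r,c}(P)$. Then $\pi$ is injective on the smaller algebra $\Q^\Iso_{r,c}(P)$, and Theorem~\ref{thm:main_theorem} gives $\ker\pi\subseteq\Js$.

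The main obstacle is only bookkeeping: matching $P_0$ with $\L_c$, and noting that a larger subalgebra inherits the essentially ideal-detecting property (monotonicity in Definition~\ref{def:ess_ideal_detecting}). No new analytic argument is needed.
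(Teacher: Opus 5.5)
Your proposal is correct and follows the paper's route: the corollary is Theorem~\ref{thm:main_theorem_sa} specialized to one object. Your justification of the $\Q_{r,c}$ case, via $F_P = S_P^\Iso\cap S_c$, the containment $\Q^\Iso_{r,c}(P)\subseteq\Q_{r,c}(P)$ and Theorem~\ref{thm:main_theorem}, supplies exactly the step the paper leaves implicit. You are also right that countability of $P$ is an unstated hypothesis; adopt it rather than the Hausdorff alternative, since Theorems~\ref{thm:MainThm1} and~\ref{thm:main_theorem} are stated and proved only for countable $S$ and $\L$.
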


	\section{$k$-graphs}
	
	A {\em $k$-graph} is a pair $(\L, \deg)$ where $\L$ is a small category and  $\deg:\L\to \NN^k$ (the {\em degree map}) satisfies
	\begin{itemize}
		\item $\deg(\a\b) = \deg(\a) + \deg(\b)$ for all $\a,\b\in\L$ with $r(\b) = d(\a)$, and
		\item if $\deg(\a) = m+n$ for some $m,n\in \NN^k$, there exist unique $\b,\ga\in \L$ with $\a = \b\ga$, $\deg(\b) = m$, and $\deg(\ga) = n$ (this is called the {\em unique factorization property}). 
	\end{itemize} 
	One can see that the unique factorization property implies that $\L$ is left and right cancellative, and is also has no invertible elements.

	In \cite{BNR14}, they define what it means for  $(\a,\b)\in\L\times\L$ to be a {\em cycline pair}. In our notation, this is equivalent to saying $d(\a) = d(\b)$ and $D_{\b\ga\L}= D_{\a\ga\L}$ for all $\ga\in d(\a)\L$, which one can easily see is equivalent to saying $\a\b^*\in S^\Iso_\L$.
	
	If one assumes that $\L$ is finitely aligned, then this combined with right cancellation implies that $\gt(S_\L)$ is Hausdorff \cite[Lemma~7.1]{Sp20}. The map $\a\b^*\mapsto \deg(\a)-\deg(\b)$ induces a cocycle $\phi:\gt(S_\L)\to\ZZ^k$ such that $\phi^{-1}(0)$ is an AF-groupoid (see \cite[Section~3]{RSY04}), and so $\gt(S_\L)$ is amenable by \cite[Corollary~4.5]{RW17}. Thus we can apply our results to obtain the following generalization of the main theorem in \cite[Theorem~7.1]{BNR14}.
	
	\begin{thm}\label{thm:k_graph}
		Let $\L$ be a finitely aligned $k$-graph, let $\cO(\L)$ be the Cuntz-Krieger algebra of $\L$, and suppose $\vp:\cO(\L)\to B$ is a $*$-homomorphism into a C*-algebra $B$. Then $\vp$ is injective if and only if it is injective on the subalgebra $C^*(\{W_\a W_\b^*: (\a,\b)\text{ is cycline}\})$. 
	\end{thm}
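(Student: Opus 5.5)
The plan is to reduce the statement to Theorem~\ref{thm:main_theorem}, specifically to the fact that $\Q_r^\Iso(\L)$ is essentially ideal-detecting. First I will identify $\cO(\L)$ with $C^*_r(\gt(S_\L))$. By Remark~\ref{rmk:CK_LCSC} and \cite{LiGarside1}, $\cO(\L)\cong C^*(\gt(S_\L))$, with $W_\a\mapsto T_\a$. Since $\gt(S_\L)$ is amenable (via the cocycle $\phi$ with AF kernel, as noted above), the full and reduced algebras coincide. Since $\gt(S_\L)$ is Hausdorff by \cite[Lemma~7.1]{Sp20}, we have $\Js(\gt(S_\L))=0$; in the Hausdorff case every element of $\C(\g)$ is continuous, so $\Ered$ is the honest restriction to the unit space, which is faithful. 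Countability of $\L$ (a $k$-graph with countably many morphisms, as is standard) gives that $S_\L$ is countable, so Theorem~\ref{thm:main_theorem} applies. Under these hypotheses, essentially ideal-detecting means ideal-detecting: any $\vp$ injective on $\Q_r^\Iso(\L)$ has $\ker\vp\subseteq\Js=0$.

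Second, I will identify $\Q_r^\Iso(\L)$ with $C^*(\{W_\a W_\b^*:(\a,\b)\text{ cycline}\})$. The paper has already observed that $(\a,\b)$ is cycline iff $\a\b^*\in S_\L^\Iso$, and under the identification $T_{\a\b^*}=T_\a T_\b^*\leftrightarrow W_\a W_\b^*$. Every $s\in S_\L^\Iso$ has the form $s=\bigcup_{i=1}^n\a_i\b_i^*$ with each $\a_i\b_i^*\in S_\L^\Iso$, by \eqref{eq:S_Lam_form} and Lemma~\ref{lem:SIsodef}(2). So it suffices to show that $T_s$ lies in the C*-algebra generated by the $T_{\a_i\b_i^*}$. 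Here $[s,D_{s^*s}]=\bigcup_i[\a_i\b_i^*,D_{\b_i\L}]$, and because these pieces are pairwise compatible the indicator $T_s$ is the join of the compatible partial isometries $T_{\a_i\b_i^*}$. By inclusion--exclusion, in the spirit of \eqref{eq:operator_s_vee_t}, this join is a polynomial in the $T_{\a_i\b_i^*}$, their adjoints, and products of these. The reverse inclusion is immediate, so the two subalgebras coincide.

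Finally, the conclusion follows. If $\vp$ is injective on the cycline subalgebra, then it is injective on $\Q_r^\Iso(\L)$, so $\ker\vp\subseteq\Js=0$ and $\vp$ is injective. The converse direction is trivial. The main obstacle is the join step: we must check that $1_{[s,D_{s^*s}]}$ really equals the operator join $\bigvee T_{\a_i\b_i^*}$ computed inside $\C(\gt)$. I would verify this pointwise on the groupoid, using that the bisections $[\a_i\b_i^*,D_{\b_i\L}]$ agree on overlaps by compatibility. That makes the indicator of the union equal to the inclusion--exclusion sum $\sum_i T_i-\sum_{i<j}T_iT_i^*T_j+\cdots$, where products such as $T_iT_i^*T_j$ are indicators of the intersections.
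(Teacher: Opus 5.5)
Your proposal is correct and follows essentially the same route as the paper. You identify $\cO(\L)$ with $C^*_r(\gt(S_\L))$, which is amenable and Hausdorff, so $\Js=0$. You then show the cycline subalgebra equals $\Q_r^\Iso(\L)$ by writing $s\in S_\L^\Iso$ as $\bigcup_i\a_i\b_i^*$ with each piece in $S_\L^\Iso$ (Lemma~\ref{lem:SIsodef}(2)) and expressing $T_s$ as the operator join of the $T_{\a_i}T_{\b_i}^*$. The only difference is that you verify this join identity directly by inclusion--exclusion on the compatible bisections $[\a_i\b_i^*,D_{\b_i\L}]$, whereas the paper cites \cite{DM14} and \cite[Theorem~6.3]{Sp14} for it.
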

	
		\begin{proof}
		By the discussion before the theorem, $\cO(\L)\cong C_r^*(\gt(S_\L))$ with the isomorphism sending $W_\a$ to $T_\a$. To apply Theorem~\ref{thm:main_theorem}, we will show that the given subalgebra coincides with $\Q_{r}^\Iso(\L)$. Let $s \in S_\L^\Iso$, and write $s = \bigcup_{i=1}^n\a_i\b_i^*$ for $\a_i, \b_i\in\L$. By work in \cite[Section~3]{DM14} following \cite[Theorem~6.3]{Sp14}, we have that $T_s = \bigvee_{i=1}^nT_{\a_i}T_{\b_i^*}$, where this wedge in a C*-algebra is as in \eqref{eq:operator_s_vee_t}. By that formula and Lemma~\ref{lem:SIsodef} we see $T_s$ is in the C*-algebra generated by the set $\{T_\a T_\b^*: \a\b^*\in S_\L^\Iso\}$. 
	\end{proof}




\bibliographystyle{alpha}
\bibliography{C:/Users/slear/Dropbox/Research/bibtex}{}
	
\end{document}